\documentclass[11pt]{amsart}
\usepackage{}
\usepackage{amsfonts}
\usepackage{amsfonts,amssymb}
\usepackage{srcltx} 

\usepackage{mathrsfs}
\let\mathcal\mathscr

\usepackage[all,ps,cmtip]{xy}

\def\llra{\hbox to 10mm{\rightarrowfill}}

\def\lllra{\hbox to 15mm{\rightarrowfill}}

\def\PA{{\widehat A}}
\def\PB{{\widehat B}}
\def\PK{{\widehat K}}

\def\PE{{\widehat E}}

\def\PT{{\widehat T}}

\def\Pp{{\widehat\pi}}

\def\phi{{\varphi}}

\def\wA{{\widetilde A}}
\def\wa{{\widetilde a}}

\def\wX{{\widetilde X}}

\def\G2{\mathbf{Z}/2\mathbf{Z}\times\mathbf{Z}/2\mathbf{Z}}

\def\cI{\mathcal{I}}

\def\cF{\mathcal{F}}

\def\cO{\mathcal{O}}

\def\cJ{\mathcal{J}}

\def\cU{\mathcal{U}}
\def\cV{\mathcal{V}}

\DeclareMathOperator{\rank}{rank}

\DeclareMathOperator{\codim}{codim}
\DeclareMathOperator{\Pic}{Pic}

\DeclareMathOperator{\pr}{pr}

\DeclareMathOperator{\Bs}{Bs}

\newtheorem{lemm}{Lemma}[section]
\newtheorem{theo}[lemm]{Theorem}

\newtheorem{prop}[lemm]{Proposition}

\newtheorem*{conj*}{Conjecture}

\theoremstyle{definition}
\newtheorem{defi}[lemm]{Definition}
\newtheorem{rema}[lemm]{Remark}

\newtheorem{exam}[lemm]{Example}

\newenvironment{proof-main}{\trivlist \item[\hskip\labelsep{\sc
\textbf{Proof of Theorem \ref{theo}.}}]\rm}{\hbox
to.1pt{\hss}\hfill$\square$\bigskip\endtrivlist}
\theoremstyle{remark}
\newtheorem*{remark*}{Remark}
\newtheorem*{note*}{Note}

\def\moins{\mathop{\hbox{\vrule height 3pt depth -2pt
width 5pt}\,}}

\begin{document}
\title[Varieties of Albanese fibre dimension one]{Cohomological support loci of varieties of Albanese fiber dimension one}

\author{Zhi Jiang}
\address{Math\'ematiques B\^atiment 425, Universit\'e Paris-Sud, F-91405 Orsay, France}

\email{Zhi.Jiang@math.u-psud.fr}
\thanks{}

\author{Hao Sun}
\address{Department of Mathematics, Huazhong Normal University, Wuhan 430079, People's Republic of China}

\email{hsun@mail.ccnu.edu.cn}
\thanks{The second author was partially supported by the Mathematical Tianyuan Foundation of China
(No. 11126192).}


\subjclass[2000]{14E05}

\date{\today}

\keywords{Irregular variety, pluricanonical map,
$M$-regularity}

\begin{abstract}
Let $X$ be a smooth projective variety of Albanese fiber dimension 1
and of general type. We prove that the translates through $0$ of all
components of  $V^0(\omega_X)$ generate $\Pic^0(X)$. We then study
the pluricanonical maps of $X$. We show that $|4K_X|$ induces a
birational map of $X$.
\end{abstract}

\maketitle

\section{Introduction}
In \cite[Theorem 1]{CH}, Chen and Hacon proved that if $X$ is of maximal
Albanese dimension and of general type, then the translates through
$0$ of all irreducible components of  $V^0(\omega_X)$ generate $\Pic^0(X)$. This
is a fundamental result to study pluricanonical maps of
varieties of maximal Albanese dimension (cf. \cite{J, T, JLT}). Here
we provide a similar result for varieties of general type and of
Albanese fiber dimension one:

\begin{theo}\label{theo} Let $X$ be a smooth projective variety of dimension $\geq 2$,
of Albanese fiber dimension one and of general type.
Then the translates through $0$ of all irreducible components of $V^0(\omega_X)$ generate $\Pic^0(X)$.
\end{theo}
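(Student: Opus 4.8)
The plan is to argue by contradiction and reduce to a fibration over a lower-dimensional abelian variety. Write $a\colon X\to A=\Alb(X)$ for the Albanese map and $Y=a(X)$, so that $\dim Y=\dim X-1$ since the Albanese fiber dimension is one; identify $\Pic^0(X)=\PA$. Suppose the translates through $0$ of the irreducible components of $V^0(\omega_X)$ generate a proper abelian subvariety $\PB\subsetneq\PA$. Dualizing the inclusion $\PB\hookrightarrow\PA$ gives a surjection $p\colon A\to B$ with $\PB=p^*\Pic^0(B)$ and $\dim B<\dim A$; set $f=p\circ a\colon X\to B$ and $K=(\Ker p)^0$, so that the general fiber of $f$ has positive dimension and $a$ carries it into a coset of $K$. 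By Simpson's theorem every component of $V^0(\omega_X)$ is a torsion translate of a subtorus of $\PB$, whence $V^0(\omega_X)\subseteq\bigcup_i(\tau_i+\PB)$ for finitely many torsion points $\tau_i$. The whole strategy is to contradict this by producing $P\in\Pic^0(X)$ with $H^0(\omega_X\otimes P)\neq0$ whose class in $\PK=\PA/\PB$ is nonzero.

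The generic vanishing input is as follows. By Hacon's generic vanishing theorem together with Koll\'ar's vanishing and decomposition, $a_*\omega_X$ and $R^1a_*\omega_X$ are $GV$-sheaves on $A$, and these are the only nonzero higher direct images because the fibers of $a$ have dimension at most one. By the projection formula and Leray, $V^0(\omega_X)=V^0(a_*\omega_X)$. Moreover, since $X$ is of general type, Iitaka's easy addition applied to the curve fibration $X\to Y$ (after Stein factorization) forces the general fiber to be a smooth curve $C$ of genus $\geq2$; since $\omega_X|_C\isom\omega_C$, the sheaf $a_*\omega_X$ is nonzero, supported on all of $Y$, with generic rank at least $g(C)$. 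This positivity is what should prevent $V^0(\omega_X)$ from collapsing into translates of $\PB$.

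Next I would analyze a general fiber $F$ of (the Stein factorization of) $f$. A dimension count shows that $F$ is again of general type and has Albanese fiber dimension at most one, with a natural map $\Alb(F)\to K$. If $F$ has maximal Albanese dimension I invoke Chen--Hacon \cite[Theorem~1]{CH}, and otherwise I apply the present theorem inductively on $\dim X$ to $F$; either way the translates through $0$ of the components of $V^0(\omega_F)$ generate a subgroup surjecting onto $\PK$. The final step is a relative/base-change argument in the spirit of Pareschi--Popa: using that $f_*\omega_X$ and the higher $R^if_*\omega_X$ are $GV$-sheaves on $B$ whose formation is compatible with restriction to $F$, the cohomology classes found on the fibers spread out to components of $V^0(\omega_X)$ with nonzero image in $\PK$, contradicting $V^0(\omega_X)\subseteq\bigcup_i(\tau_i+\PB)$.

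The main obstacle is precisely this last spreading-out step. Relating $V^0(\omega_X)$ to the cohomological support loci of the fibers $F$ requires controlling how the two $GV$-sheaves $a_*\omega_X$ and $R^1a_*\omega_X$ interact with the quotient $p$ and the fibration $f$, and ensuring that the classes produced on the fibers genuinely survive as classes transverse to $\PB$ rather than being absorbed into the torsion translates $\tau_i$. In addition, the degenerate extreme $\dim B=0$ (where $V^0(\omega_X)$ would be finite while $q(X)\geq\dim Y\geq1$) must be dispatched separately, and the case $Y\subsetneq A$, in which $a$ is not surjective, forces the analysis to be carried out on $Y$ rather than on all of $A$.
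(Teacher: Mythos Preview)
Your overall framework---assume the components generate a proper subtorus $\PB$, dualize to get a fibration $f\colon X\to B$, and study a general fiber $F$---matches the paper's proof of Theorem~\ref{maintheorem1}. However, there are two genuine gaps.

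\textbf{The base case is the whole point.} The case you set aside, $\dim B=0$ (equivalently $V^0(\omega_X)$ is a finite set), is not a degenerate edge case: it is the substantive core, handled in the paper as Proposition~\ref{main}. That proof requires real work---showing $a_{X*}\omega_X$ is a homogeneous vector bundle, that $a_X$ is already a fibration, passing to an \'etale cover to force maximal Albanese dimension (where Lemma~\ref{lemma 1} applies), and a final cohomological dimension count. None of this is visible in your outline, and without it the induction has no place to start.

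\textbf{The inductive step confuses absolute and relative loci.} You apply Chen--Hacon or the inductive hypothesis to conclude that the translates of the components of $V^0(\omega_F)\subset\PA_F$ generate $\PA_F$, and then assert this ``surjects onto $\PK$''. But the natural map goes the other way: $h\colon F\to K$ factors through $a_F$, giving $\hat\pi\colon\PK\to\PA_F$, not a map $\PA_F\to\PK$. What is actually needed is that the \emph{relative} locus $V^0(\omega_F,h)=\hat\pi^{-1}(V^0(\omega_F))\subset\PK$ is positive-dimensional, and this does \emph{not} follow from knowing that the components of $V^0(\omega_F)$ generate $\PA_F$. (A subtorus $\hat\pi(\PK)$ can meet a collection of subtori generating $\PA_F$ only in finitely many points.) The paper proves exactly the needed relative statement for the maximal Albanese dimension case as Lemma~\ref{lemma 1}, and in the Albanese fiber dimension one case it does not induct at all but instead combines Proposition~\ref{main} with Proposition~\ref{fiberdimension} and a surjectivity argument for $V^0(\omega_F)\to\PA_F/\hat\pi(\PK)$.

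Relatedly, your ``spreading out'' runs in the wrong direction. The $GV$ property of $f_*(\omega_X\otimes P)$ on $B$ gives a \emph{surjection} $V^0(\omega_X)\twoheadrightarrow V^0(\omega_F,h)$ under the restriction $\PA\to\PK$; under the standing assumption this forces $V^0(\omega_F,h)$ to be finite, and the task is then to contradict that finiteness on $F$. Producing classes on $F$ and lifting them to $X$ is not the mechanism.
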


We notice that this kind of result applies only for varieties of
Albanese fiber dimension $\leq 1$. Indeed, we take $X=Y\times Z$,
where $Y$ is a variety of maximal Albanese dimension and of general
type and $Z$ is a variety of general type of dimension $l\geq 2$
with $p_g(Z)=q(Z)=0$. Then $X$ is of Albanese fiber dimension $l$
but $V^0(\omega_X)$ is empty. As an application of Theorem
\ref{theo}, we give an improvement of a result of Chen and Hacon
\cite[Corollary 4.3]{ch2}:
\begin{theo}\label{theorem1.1}
Let $X$ be a smooth projective variety of Albanese fiber dimension
one and of general type. Then the $4$-canonical map $\varphi_{|4K_X|}$
is birational.
\end{theo}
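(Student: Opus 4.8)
The plan is to prove that $\varphi_{|4K_X|}$ separates two general points of $X$, which is enough for birationality. Write $a\colon X\to A$ for the Albanese map; by hypothesis its general fibre $F$ is a smooth connected curve, and since $X$ is of general type with one-dimensional Albanese fibres, easy addition forces $g(F)\geq 2$. I would choose two general points $x_1,x_2\in X$: being general, they lie on smooth fibres over which $a$ is nice, and avoid the (proper) locus where the fibre dimension jumps or the fibres degenerate. I would then distinguish the two cases $a(x_1)=a(x_2)$ (the points lie on one fibre) and $a(x_1)\neq a(x_2)$ (the points lie on distinct fibres), and separate them in each case.

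Consider first the same-fibre case. On a general fibre $F$ the normal bundle $N_{F/X}$ is trivial, being the restriction of $a^{*}T_A$ with $T_A$ trivial, so adjunction gives $4K_X|_F=4K_F$. This line bundle has degree $8(g-1)\geq 2g+1$ and is therefore very ample on the curve $F$. Hence it suffices to prove that the restriction map $H^0(X,4K_X)\to H^0(F,4K_F)$ is surjective. I would deduce this from a vanishing statement for the ideal sheaf of $F$ twisted into $4K_X$: pushing forward by $a$ and invoking generic vanishing for $a_*\omega_X$ together with the non-emptiness of the support loci guaranteed by Theorem~\ref{theo}, the obstruction group vanishes after a suitable general twist, and the untwisting is harmless because the points are general.

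For the distinct-fibre case the heart is Theorem~\ref{theo}. The strategy is to use the generation of $\Pic^0(X)$ by the translated components of $V^0(\omega_X)$ to control the codimension of the cohomological support loci $V^i$ of the relevant pushforward sheaf on $A$, and thereby to show that it is $M$-regular, hence continuously globally generated in the sense of Pareschi and Popa. Writing $4K_X$ as a tensor product of two copies of $\omega_X^{\otimes2}$ and applying the tensor-product theorem for $M$-regular (equivalently, continuously globally generated) sheaves, one obtains that $a_*\omega_X^{\otimes4}$ separates the points $a(x_1)\neq a(x_2)$ of $A$. Since every general fibre contributes the nonzero space $H^0(F,4K_F)$, pulling back separates the two fibres, and hence the two points.

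The main obstacle I foresee is the passage, via Theorem~\ref{theo}, from the mere generation of $\Pic^0(X)$ by the components of $V^0(\omega_X)$ to the genuine $M$-regularity (equivalently, continuous global generation) of the pushforward sheaf on $A$; this is exactly the step where the group-theoretic input must be converted into a positivity/vanishing statement in the right codimension. Intertwined with this is the bookkeeping over the locus where $a$ fails to be smooth (multiple, non-reduced, or higher-dimensional fibres): one must guarantee that the two general points avoid it, and that both the restriction-surjectivity of the same-fibre case and the continuous global generation on $A$ remain valid near the fibres $F$ where $4K_F$ may be only nef rather than ample. Reconciling these two requirements is where the real work of the proof lies.
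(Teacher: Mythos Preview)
Your proposal has the right flavor—$M$-regularity and continuous global generation are indeed the engine—but both the organization and the key technical steps differ from the paper, and your same-fibre argument contains a genuine gap.

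\textbf{The gap.} In the same-fibre case you want surjectivity of $H^0(X,4K_X)\to H^0(F,4K_F)$, and you propose to get it by generic vanishing ``after a suitable general twist'', claiming that ``the untwisting is harmless because the points are general''. This does not work: the twist lives in $\Pic^0(X)$ and has nothing to do with the choice of points $x_1,x_2\in X$. Genericity of $x_1,x_2$ does not allow you to replace $|4K_X|$ by $|4K_X+P|$ for generic $P$; you must actually separate with the specific linear series $|4K_X|$. The paper never proves this restriction surjectivity and does not need it.

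\textbf{What the paper actually does.} Rather than splitting into same/different fibres, the paper pushes forward the single sheaf $\cI_x\otimes\omega_X^{2}\otimes\cJ(\|K_X\|)$ for general $x$ and proves (Proposition~\ref{M-regular}) that its image on $A_X$ is $M$-regular, hence CGG. This is precisely where Theorem~\ref{theo} enters: the generation of $\Pic^0(X)$ by the translates of the components of $V^0(\omega_X)$ forces $\codim V^1\big(a_{X*}(\cI_x\otimes\omega_X^{2}\otimes\cJ(\|K_X\|))\big)\geq 2$. The multiplier ideal is essential to get the Nadel-type vanishing making $a_{X*}(\omega_X^{2}\otimes\cJ(\|K_X\|))$ an $IT^0$ sheaf, and a separate Lemma~\ref{rest} shows that $\cJ(\|K_X\|)$ is trivial along general fibres so that nothing is lost fibrewise. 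Once CGG is known, for any two general points $x,y$ and generic $Q$ there is a section of $|2K_X+Q|$ through $x$ missing $y$, provided $|2K_{F}|$ separates them on the fibre; multiplying two such sections with $Q_1+Q_2=P$ gives a section of $|4K_X+P|$ separating $x,y$ for every $P$, in particular $P=0$. This handles same-fibre and different-fibre points simultaneously.

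\textbf{A cost you would avoid, and one the paper pays.} Because the paper uses $|2K_F|$ fibrewise, it must treat genus-$2$ fibres separately: there $|2K_F|$ only realizes the hyperelliptic quotient, and an extra argument with the involution and the $\tau$-anti-invariant part of $\phi_*\cO_X(4K_X)$ is needed (Step~3 of Theorem~\ref{pluri}). Your use of $|4K_F|$, which is very ample for all $g\geq 2$, would sidestep this—\emph{if} you could establish the restriction surjectivity. But as it stands that step is missing, and the correct route is through the $\cI_x$-twisted $M$-regular sheaf with the multiplier ideal, not through restriction to a single fibre.
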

In Section 2, we recall some definitions and results which will be used in this paper. We will prove
Theorem \ref{theo} in Section 3 and extend it to non-general type case in Section 4. We then study  pluricanonical
maps of varieties of Albanese fiber dimension 1 and prove Theorem
\ref{theorem1.1} in Section 5.

\subsection*{Acknowledgements} The first author thanks Prof. Jungkai A. Chen for various comments and pointing out an error in an earlier version of this paper.
The second author would like to thank Dr.
Lei Zhang for many useful discussions. Finally, the authors are grateful to the referee for carefully reading
the paper and for many suggestions that improve the exposition.

\section{Notations and Preliminaries}
 In this section, we recall some notations and techniques that will be
used throughout the paper.

We will always denote by $X$ a smooth complex projective variety
and $a_X: X\rightarrow A_X$  its Albanese morphism. If $l=\dim X-\dim a_X(X)$, then we say that $X$ is of Albanese fiber
dimension $l$.  In particuler, $X$ is of Albanese fiber dimension $0$ if and
only if $X$ is of maximal Albanese dimension.

For an abelian variety $A$, we
will frequently denote by $\PA=\Pic^0(A)$ the dual abelian variety. Moreover,
for a morphism $t: A\rightarrow B$ between abelian varieties, we
will denote by $\widehat{t}: \PB\rightarrow \PA$ the dual morphism
between the dual abelian varieties. By abuse of notation, we  identify a point of $\PA$ with the corresponding line bundle of $A$.

Let $t: X\rightarrow A$ be a morphism from $X$ to an abelian variety and
let $\cF$ be a coherent sheaf on $X$, we will denote by $V^i(\cF, t)=\{P\in\PA\mid h^i(X, \cF\otimes t^*P)>0\}$ the $i$-th cohomological support locus of $\cF$.
In particular, if $t=a_X$, we will simply denote $V^i(\cF, a_X)$ by $V^i(\cF)$.

When  $\cF=\omega_X$, we have the following important theorem of Green-Lazarsfeld and Simpson (see \cite[Theorem 0.1]{gl2} and \cite[Theorem 4.2]{sim}):
\begin{theo}\label{structure}
For any $i\geq 0$, each irreducible component of $V^i(\omega_X, t)$ is a torsion translate of a subtorus of $\PA$.
\end{theo}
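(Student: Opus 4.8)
The plan is to prove the two parts of the statement separately: that each irreducible component of $V^i(\omega_X,t)$ is a translate of a subtorus of $\PA$, and that the translating point can be taken to be torsion. First I would reduce to the Albanese map. By the universal property, $t$ factors as $X\xrightarrow{a_X}A_X\xrightarrow{\alpha}A$ with $\alpha$ the composite of a homomorphism and a translation; since translations act trivially on $\Pic^0$, we get $t^*P=a_X^*\widehat\alpha(P)$ and therefore $V^i(\omega_X,t)=\widehat\alpha^{-1}\bigl(V^i(\omega_X)\bigr)$, where $\widehat\alpha\colon\PA\to\widehat{A_X}$ is the dual homomorphism. As the preimage of a torsion translate of a subtorus under a homomorphism of abelian varieties is again a finite union of torsion translates of subtori, it suffices to treat $V^i(\omega_X)$.

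For the subtorus assertion I would use the infinitesimal method of Green and Lazarsfeld \cite{gl2}. Fix $P\in V^i(\omega_X)$ and identify $T_P\widehat{A_X}$ with $H^1(X,\cO_X)$. Whether a class in $H^i(X,\omega_X\otimes P)$ survives a first-order deformation of $P$ in a direction $v\in H^1(X,\cO_X)$ is governed by the cup-product maps $H^i(\omega_X\otimes P)\xrightarrow{\cup v}H^{i+1}(\omega_X\otimes P)$; letting $v$ vary assembles these into the derivative complex over the exterior algebra $\bigwedge^\bullet H^1(X,\cO_X)$. Generic vanishing shows this complex is exact away from the support loci, which forces the germ of $V^i(\omega_X)$ at $P$ to be linear in the logarithmic coordinate; transporting this linearity through the exponential map $H^1(X,\cO_X)\to\Pic^0(X)$ identifies each component through $P$ with a translate of a subtorus of $\widehat{A_X}$.

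For the torsion assertion I would pass to the Betti side, following Simpson \cite{sim}. Each $P\in\Pic^0(X)$ is the unitary flat line bundle attached to a character $\rho$ of $\pi_1(X)$, and by the Hodge decomposition of the cohomology of $\C_\rho$ the jumping of $h^i(\omega_X\otimes P)$ is exactly one Hodge piece of the jumping of $H^{\dim X+i}(X,\C_\rho)$; thus $V^i(\omega_X)$ is a union of components of the cohomology jump loci $\Sigma^k$ inside the character variety $\Hom(\pi_1(X),\C^*)$. Simpson's theorem says these $\Sigma^k$ are finite unions of torsion translates of subtori: the loci are algebraic, are preserved by the $\C^*$-action coming from the Dolbeault moduli of Higgs bundles and, after spreading $X$ out, by a Galois action on the de Rham moduli, and a translate of a subtorus preserved by these symmetries must be based at a point of finite order by a Kronecker-type density argument. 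Restricting to the relevant Hodge component then gives the claim for $V^i(\omega_X)$.

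The whole difficulty sits in the torsion statement. The subtorus part is a formal consequence of generic vanishing once the derivative complex is in place, whereas the torsion part is genuinely arithmetic and rests on the full force of Simpson's nonabelian Hodge correspondence together with the density argument forcing the base point to be torsion. In a self-contained account I would isolate this as the crux and invoke \cite{sim} directly, exactly as the statement does here.
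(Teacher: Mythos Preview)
The paper does not give its own proof of this theorem; it states it as a known result and cites \cite[Theorem 0.1]{gl2} and \cite[Theorem 4.2]{sim}. Your proposal is a faithful expansion of exactly those two references: the subtorus assertion via the Green--Lazarsfeld derivative complex and the torsion assertion via Simpson's arithmetic argument on the character variety. Your reduction to the Albanese map---writing $V^i(\omega_X,t)=\widehat\alpha^{-1}\bigl(V^i(\omega_X)\bigr)$ and observing that the preimage of a torsion translate of a subtorus under a homomorphism of abelian varieties is again a finite union of torsion translates of subtori---is correct and is the natural way to pass from the cited theorems, which are stated for $\Pic^0(X)=\PA_X$, to the version for an arbitrary morphism $t\colon X\to A$.
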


We recall the following definitions due to Pareschi and Popa (\cite{pp1} and \cite{pp2}).

\begin{defi}
Let $\mathcal{F}$ be a coherent sheaf on a smooth projective variety
$Y$.
\begin{enumerate}
\item The sheaf $\mathcal{F}$ is said to be a $GV$-sheaf if $\codim
V^i(\mathcal{F})\geq i$ for all $i\geq0$.
\item If $Y$ is an abelian variety, $\mathcal{F}$ is said to be $M$-regular if $\codim
V^i(\mathcal{F})>i$ for every $i>0$.
\end{enumerate}
\end{defi}

\begin{lemm}\label{lemmaPP}
Assume that $\mathcal{F}$ is a $GV$-sheaf on a smooth projective variety
$Y$. Let $W$ be an irreducible component of $V^0(\mathcal{F})$ such that
$k=\codim_{\Pic^0(Y)}W$. Then $W$ is also a component of
$V^k(\mathcal{F})$. In particular, $\dim X\geq k$.
\end{lemm}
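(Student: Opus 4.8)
The plan is to pass to the abelian variety $A=A_Y$ and invoke the Fourier--Mukai characterization of $GV$-sheaves due to Hacon and Pareschi--Popa. Write $\widehat A=\Pic^0(Y)$, set $g=\dim A$, and let $E=Ra_{Y*}\mathcal{F}\in D^b(A)$. By the projection formula together with the Leray spectral sequence one has $h^i(Y,\mathcal{F}\otimes a_Y^*P)=\dim\mathbb H^i(A,E\otimes P)$ for every $P\in\widehat A$, so $V^i(\mathcal{F})$ is exactly the $i$-th hypercohomological support locus of $E$, and the $GV$-hypothesis says precisely that $E$ is a $GV$-object on $A$.

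First I would record the two inputs from the theory. Let $R\Delta_A E=R\mathcal{H}om_{\mathcal O_A}(E,\mathcal O_A)$ be the derived dual and let $R\widehat S$ denote the Fourier--Mukai transform with kernel the Poincar\'e bundle. The generic vanishing theorem in the form of Hacon / Pareschi--Popa asserts that $E$ is a $GV$-object if and only if $R\widehat S(R\Delta_A E)$ is concentrated in cohomological degree $g$; denote by $\mathcal{G}$ the coherent sheaf on $\widehat A$ with $R\widehat S(R\Delta_A E)\cong\mathcal{G}[-g]$. Next, combining Serre--Grothendieck duality on $A$ with cohomology-and-base-change for $R\widehat S$ yields, for each point $\alpha\in\widehat A$,
\[
\mathbb H^i(A,E\otimes P_\alpha)^\vee\cong \mathcal{T}or_i^{\mathcal O_{\widehat A}}(\mathcal{G},\kappa(-\alpha)),
\]
so that, up to the harmless involution $\alpha\mapsto-\alpha$, we obtain the identification $V^i(\mathcal{F})=\{\,x\in\widehat A:\mathcal{T}or_i^{\mathcal O_{\widehat A}}(\mathcal{G},\kappa(x))\neq0\,\}$; in particular $V^0(\mathcal{F})=\Supp\mathcal{G}$.

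With these in hand the statement becomes commutative algebra on the smooth variety $\widehat A$. Let $W$ be a component of $V^0(\mathcal{F})=\Supp\mathcal{G}$ of codimension $k$, with generic point $\eta$ and corresponding prime $\mathfrak p$ of height $k$. Since $\eta$ is a minimal point of $\Supp\mathcal{G}$, the localization $\mathcal{G}_{\mathfrak p}$ is a finite-length module over the regular local ring $\mathcal O_{\widehat A,\mathfrak p}$ of dimension $k$, hence has depth $0$; the Auslander--Buchsbaum formula then gives projective dimension $k$, so the top Tor does not vanish: $\mathcal{T}or_k^{\mathcal O_{\widehat A,\mathfrak p}}(\mathcal{G}_{\mathfrak p},\kappa(\mathfrak p))\neq0$. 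Thus $\eta\in V^k(\mathcal{F})$ and therefore $W=\overline{\{\eta\}}\subseteq V^k(\mathcal{F})$. Finally, because $E$ is $GV$ every component of $V^k(\mathcal{F})$ has codimension $\geq k$, so an irreducible closed subset of codimension exactly $k$ contained in $V^k(\mathcal{F})$ is forced to be a whole component, which proves the first assertion. For the last statement, $W\subseteq V^k(\mathcal{F})\neq\varnothing$ gives some $\alpha$ with $h^k(Y,\mathcal{F}\otimes a_Y^*P_\alpha)>0$, whence $k\leq\dim Y$.

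The main obstacle, or rather the real content, is the bookkeeping in the second paragraph: one must verify that $R\widehat S(R\Delta_A E)$ really is a single sheaf $\mathcal{G}$ placed in degree $g$ and pin down the exact index in the base-change isomorphism $\mathbb H^i(E\otimes P_\alpha)^\vee\cong\mathcal{T}or_i(\mathcal{G},\kappa(-\alpha))$, since a mistaken shift would replace $\mathcal{T}or_i$ by $\mathcal{T}or_{g-i}$ or by an $\mathcal{E}xt$-sheaf and would wreck the identification $V^0(\mathcal{F})=\Supp\mathcal{G}$. I would fix the indices by testing on $\mathcal{F}=\mathcal O_A$, where $\mathcal{G}=\kappa(0)$ is the skyscraper at the origin of $\widehat A$ and the formula correctly returns $V^i=\{0\}$ for all $0\leq i\leq g$. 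Once the normalization is settled, the Auslander--Buchsbaum step is routine.
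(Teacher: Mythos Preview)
Your argument is correct and is precisely the Fourier--Mukai/commutative-algebra proof of Pareschi--Popa's Proposition~3.15, which is exactly what the paper's proof amounts to (the authors simply cite \cite[Proposition~3.15]{pp2}). The only minor comment is that your reduction step replaces the sheaf $\mathcal{F}$ on $Y$ by the complex $E=Ra_{Y*}\mathcal{F}$ on $A_Y$; the Pareschi--Popa characterization you invoke is indeed stated and proved in \cite{pp2} for objects of $D^b(A)$, so this is legitimate, but it is worth making explicit since the lemma as stated concerns a sheaf on $Y$ rather than on the abelian variety.
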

\begin{proof}
Please see \cite[Proposition 3.15]{pp2}.
\end{proof}


The following definition is also introduced by Pareschi and Popa (\cite[Definition 2.10]{pp1}).
\begin{defi}\label{definition}
Let $\mathcal{F}$ be a coherent sheaf on a smooth projective variety
$Y$. We say $\mathcal{F}$ is continuously globally generated at $y\in Y$ (in brief CGG at $y$)
if the nature map $$\bigoplus_{P\in U}
H^0(Y, \mathcal{F}\otimes P)\otimes{P}^{\vee}\rightarrow
\mathcal{F}\otimes \mathbb{C}(y)$$ is surjective for any non-empty
open subset $U\subset{\Pic}^0(Y)$.
\end{defi}

We will need the following theorem (\cite[Proposition 2.13]{pp1}):
\begin{theo}\label{pareschi-popa}
Let $\cF$ be a M-regular sheaf on an abelian variety $A$. Then $\cF$ is CGG at each point of $A$.
\end{theo}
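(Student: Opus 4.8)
The plan is to reduce continuous global generation at a point to a codimension estimate on a cohomological support locus, and then to deduce that estimate from $M$-regularity. Fix $y\in A$ and write $\cF(y)=\cF\ot\mathbb{C}(y)$ for the fibre. By definition $\cF$ is CGG at $y$ precisely when, for every nonempty open $U\subseteq\PA$, the values at $y$ of the sections in $\bigoplus_{P\in U}H^0(A,\cF\ot P)$ span $\cF(y)$. Dualizing, failure of this for some $U$ produces a nonzero functional $\lambda\in\cF(y)^\vee$ annihilating all of these values; such a $\lambda$ is the same datum as a surjection $\cF\tto\mathbb{C}(y)$, whose kernel $K=K_\lambda$ fits in a short exact sequence
\[
0\lra K\lra\cF\lra\mathbb{C}(y)\lra 0 .
\]
First I would twist this sequence by $P\in\PA$ and pass to cohomology. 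Since the skyscraper is unaffected by the twist, the connecting map $\delta_P\colon\mathbb{C}=H^0(\mathbb{C}(y))\to H^1(K\ot P)$ vanishes exactly when some section of $\cF\ot P$ has nonzero $\lambda$-value at $y$; hence the ``bad'' locus of those $P$ at which $\lambda$ kills every section is contained in $V^1(K)$.

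The reduction is then: to prove CGG at $y$ it suffices to show $V^1(K)\subsetneq\PA$ for every such kernel $K$. Indeed, if $V^1(K)$ is a proper closed subset, no nonempty open $U$ can be contained in the bad locus, so no single $\lambda$ can annihilate the values of all sections over $U$, and the spanning property follows. From the long exact sequence one reads off $V^i(K)=V^i(\cF)$ for $i\geq 2$, and $M$-regularity gives $\codim V^i(\cF)>i$, so these loci are harmlessly proper. The genuine content is therefore confined to $i=1$, where $V^1(K)\subseteq V^1(\cF)\cup\{P:\delta_P\neq 0\}$: the first piece is proper by $M$-regularity, and the whole problem is to control the second.

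The main obstacle is exactly to show that $\{P:\delta_P\neq 0\}$ is not Zariski dense, equivalently that $K$ is a $GV$-sheaf, so that $\codim V^1(K)\geq 1$. Here I would pass to the Fourier--Mukai transform $R\widehat S\colon D(A)\to D(\PA)$ and use the Pareschi--Popa--Hacon criterion that $M$-regularity of $\cF$ is equivalent to the derived dual $R\Delta\cF=R\cH om(\cF,\cO_A)$ transforming to a single \emph{torsion-free} sheaf, placed in degree $g=\dim A$. Applying $R\Delta$ to the sequence above and using $R\Delta\,\mathbb{C}(y)\isom\mathbb{C}(y)[-g]$, then applying $R\widehat S$, one obtains a long exact sequence of transform sheaves in which the contribution of the skyscraper is a line bundle on $\PA$; comparing degrees should force $\widehat{R\Delta K}$ to be again a sheaf concentrated in degree $g$, i.e.\ $K$ to be a $GV$-sheaf. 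The delicate point, and where torsion-freeness of $\widehat{R\Delta\cF}$ is essential, is the bookkeeping that makes the potential extra cohomology sheaf arising from the line-bundle term vanish rather than produce a codimension-zero component of $V^1(K)$; granting this, $V^1(K)\subsetneq\PA$, and since $y$ was arbitrary, CGG holds at every point.
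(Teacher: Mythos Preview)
The paper does not give its own proof of this theorem; it simply quotes \cite[Proposition~2.13]{pp1}. So there is nothing in the paper to compare your argument against directly.

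That said, your outline is correct and is exactly the Fourier--Mukai argument underlying the Pareschi--Popa theory (in the form using the torsion-free criterion, as in \cite{pp2} and \cite{Hac}). Your reduction is sound: CGG at $y$ fails if and only if some nonzero quotient $\cF\tto\mathbb{C}(y)$ has kernel $K$ with $V^1(K)$ containing a nonempty open set, and since $V^i(K)=V^i(\cF)$ for $i\geq 2$, the only issue is whether $V^1(K)\subsetneq\PA$, i.e.\ whether $K$ is $GV$.

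The ``delicate point'' you flag is short once written out. Dualising the triangle $K\to\cF\to\mathbb{C}(y)$ and applying $R\widehat S$ gives (with $G:=R^g\widehat S(R\Delta\cF)$ torsion-free by $M$-regularity, and $R\widehat S(\mathbb{C}(y))=P_y$ a line bundle)
\[
0\lra \cH^{g-1}\big(R\widehat S(R\Delta K)\big)\lra P_y\xrightarrow{\ \phi\ } G .
\]
Here $\phi$ is the image of the chosen surjection $\cF\to\mathbb{C}(y)$ under the composite of the anti-equivalence $R\Delta$ and the equivalence $R\widehat S$, hence $\phi\neq 0$; and a nonzero map from a line bundle to a torsion-free sheaf on an integral scheme is injective. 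Thus $\cH^{g-1}=0$, $R\widehat S(R\Delta K)$ is concentrated in degree $g$, $K$ is $GV$, and $V^1(K)\subsetneq\PA$. This is precisely where torsion-freeness enters, and it completes your sketch.
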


The following proposition is a slight generalization of \cite[Theorem 0.1 (2)]{gl2}.
\begin{prop}\label{fiberdimension}
 Let $\alpha: X\rightarrow A$ be a morphism from a smooth porjective variety to an abelian variety such that $\dim X-\dim \alpha(X)=s$.
Assume that $V^0(\omega_X, \alpha)$ has an irreducible component $P_0+\PB$ of
 codimension $0\leq k<\infty$ in $\PA$, where $P_0$ is a torsion point and $\PB$ is an abelian subvariety of $\PA$.
Then there exists a commutative diagram
\begin{eqnarray*}
 \xymatrix{X\ar[r]^{\alpha}\ar[d]_f & A \ar[d]^{\pi}\\
X_B\ar[r]^g& B,}
\end{eqnarray*}
where $\pi$ is the natural projection, $f$ is a fibration and $X_B$ is a normal variety of dimension $\dim X-s-k$.
\end{prop}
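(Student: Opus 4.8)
The plan is to build the diagram by duality and a Stein factorization, and then to reduce the whole statement to a single dimension count. Dualizing the inclusion $\PB\hookrightarrow\PA$ produces a surjection $\pi\colon A\to B$ whose kernel $K=\Ker\pi$ is an abelian subvariety of dimension $k$ (since $\codim_{\PA}\PB=k$ forces $\dim B=\dim A-k$), with $\Pp\colon\PB\hookrightarrow\PA$ the given inclusion. Setting $\beta=\pi\circ\alpha\colon X\to B$ and taking its Stein factorization $X\xrightarrow{f}X_B\xrightarrow{g}B$, where $X_B=\cSpec_B(\beta_*\cO_X)$, we get $f$ a fibration, $g$ finite onto its image, and $X_B$ normal (Stein factorization of a morphism from the smooth, hence normal, $X$). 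Then $\pi\circ\alpha=g\circ f$ makes the square commute, so the diagram exists with the stated properties, and $\dim X_B=\dim\beta(X)=\dim\pi(\alpha(X))$. Thus the only remaining content is to prove $\dim\pi(\alpha(X))=\dim X-s-k$.

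For the lower bound I would note that $\pi|_{\alpha(X)}$ has fibers contained in cosets of $K$, which are $k$-dimensional; so the fiber-dimension theorem gives $\dim\pi(\alpha(X))\geq\dim\alpha(X)-k$, and $\dim\alpha(X)=\dim X-s$ by hypothesis, yielding $\dim\pi(\alpha(X))\geq\dim X-s-k$.

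The upper bound $\dim\pi(\alpha(X))\leq\dim X-s-k$ is the heart of the matter, and I expect it to be the main obstacle. It is equivalent to saying that the general fiber of $\pi|_{\alpha(X)}$ is $k$-dimensional, i.e. that $\alpha(X)$ is a union of $K$-cosets ($\alpha(X)+K=\alpha(X)$), equivalently that the general fiber of $f$ has dimension exactly $s+k$. This rigidity cannot be read off from $B$ alone (a sheaf on $B$ whose $V^0$ is all of $\PB$ may have arbitrarily small support, e.g. a skyscraper), so it must exploit that $P_0+\PB$ is a whole \emph{component}, i.e. a maximal torsion-translated subtorus inside $V^0(\omega_X,\alpha)$: were $\pi(\alpha(X))$ too large, one should be able to enlarge $\PB$ to a subtorus $\PB'\supsetneq\PB$ with $P_0+\PB'\subseteq V^0(\omega_X,\alpha)$, contradicting maximality.

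To make this precise I would reduce to Green--Lazarsfeld. Factoring $\alpha$ through the Albanese morphism as $X\xrightarrow{a_X}A_X\xrightarrow{h}A$, one has $V^0(\omega_X,\alpha)=\Ph^{-1}(V^0(\omega_X))$ for $\Ph\colon\PA\to\widehat{A_X}$ the dual of $h$, which matches our component with one produced by their theorem and lets me assume $\alpha=a_X$ (so that $s$ is the genuine Albanese fiber dimension); a torsion translate further reduces to $P_0=0$. The assertion that the associated fibration then has base of dimension $\dim X-s-k$ is exactly the slight generalization of \cite[Theorem 0.1(2)]{gl2} claimed here. Its proof proceeds by the infinitesimal analysis of $V^0$: at a general $P\in P_0+\PB$ and a general $0\neq t\in H^0(X,\omega_X\otimes\alpha^*P)$, the tangent space to the component is the space of $v\in H^1(X,\cO_X)$ annihilating $t$ under cup product, and its identification with $H^1(B,\cO_B)$ yields, through a Castelnuovo--de Franchis type argument applied to the pulled-back $1$-forms $\beta^*H^0(B,\Omega_B^1)$, precisely the fibration $f$ with fibers of the expected dimension. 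Carrying the fiber dimension $s$ correctly through this forms argument is the step I expect to demand the most care.
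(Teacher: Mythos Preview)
Your construction of the diagram and the easy inequality $\dim X_B\geq\dim X-s-k$ are fine and match the paper. For the hard inequality, however, the paper takes a completely different route from your Green--Lazarsfeld tangent-space analysis. It argues purely cohomologically: since $\alpha_*\omega_X$ is a $GV$-sheaf on $A$ (Hacon), Lemma~\ref{lemmaPP} says that a codimension-$k$ component $P_0+\PB$ of $V^0(\alpha_*\omega_X)$ is automatically a component of $V^k(\alpha_*\omega_X)$; in particular $H^k(X_A,h_*\omega_X\otimes t^*Q)\neq0$ for every $Q\in P_0+\PB$ (with $X\!\to\!X_A\!\to\!A$ the Stein factorization of $\alpha$). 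Applying Koll\'ar's decomposition to the further fibration $h_B\colon X_A\to X_B$ and using that each $R^jh_{B*}(h_*(\omega_X\otimes P_0))$ is again $GV$ on $B$, one sees that only the term $R^kh_{B*}$ can contribute for generic $P\in\PB$, hence $R^kh_{B*}(h_*(\omega_X\otimes P_0))\neq0$. Koll\'ar's theorem then forces $\dim X_A-\dim X_B\geq k$, which together with the trivial inequality gives the exact value $\dim X_B=\dim X-s-k$.

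Compared with your plan, the paper's argument works directly for an arbitrary $\alpha$ and never reduces to the Albanese map. This is precisely where your proposal is weakest: under $\Ph\colon\PA\to\PA_X$, a component $P_0+\PB$ of $V^0(\omega_X,\alpha)=\Ph^{-1}(V^0(\omega_X))$ corresponds to a component $Q_0+\PC$ of $V^0(\omega_X)$ with $\Ph(\PB)\subseteq\PC$, but in general $\Ph(\PB)\subsetneq\PC$, so neither the codimension nor the induced quotient (hence the fibration) need match; one would have to check separately that $s+k=s'+k'$ and that the Stein factorizations over $B$ and over $C$ agree, which you do not address. The infinitesimal/derivative-complex argument you sketch can in principle be run directly for $\alpha$ (the cup product $H^1(A,\cO_A)\to H^1(X,\cO_X)$ still makes sense), avoiding the reduction, but that is not what you wrote. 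In short: the paper's $GV$+Koll\'ar approach is shorter, uniform in $\alpha$, and uses only results already quoted in the preliminaries; your approach is the original Hodge-theoretic one and would require either a more careful reduction or a direct adaptation of the derivative-complex argument to a non-Albanese $\alpha$.
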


\begin{proof}
By assumption, $P_0+\PB$ is an irreducible component of $V^0(\omega_X, \alpha)=V^0(\alpha_*\omega_X)$.
We know that $\alpha_*\omega_X$ is a $GV$-sheaf on $A$ (see \cite[Corollary 4.2]{Hac}). Hence by Lemma \ref{lemmaPP}, $P_0+\PB$ is an irreducible component
of $V^k(\alpha_{*}\omega_X)$.

 Let $X\xrightarrow{h} X_A\xrightarrow{t} A $ (resp. $X_A\xrightarrow{h_B} X_B\xrightarrow{g} B$) be the Stein factorization of $\alpha$
(resp. the Stein factorization of the composition $\pi\circ t$). We then have the following commutative diagram
\begin{eqnarray*}
\xymatrix{
X\ar[dr]^{\alpha}\ar[d]^h\ar@/_2pc/[dd]_{f}\\
X_A\ar[r]^t\ar[d]^{h_B} & A\ar[d]^{\pi}\\
X_B\ar[r]^g& B.}
\end{eqnarray*}

Since $t$ is finite, $H^k(A, \alpha_{*}\omega_X\otimes Q)=H^k(X_A, h_*\omega_X\otimes t^*Q)\neq 0$, for any $Q\in P_0+\PB$. On the other hand,
by Koll\'ar's theorem
(\cite[Theorem 3.4]{kollar2})
we have
\begin{eqnarray*}
h^k(X_A, h_*\omega_X\otimes t^*(P_0\otimes P))&= &\sum_{i+j=k }h^i(X_B, R^jh_{B*}(h_*(\omega_X\otimes P_0))\otimes g^*P)\\&\neq&0,
\end{eqnarray*}
for all $P\in \PB$. For all $j\geq 0$, since $P_0$ is a torsion line bundle, the sheaves $R^jh_{B*}(h_*(\omega_X\otimes P_0))$ are $GV$-sheaves
(see \cite[Theorem 2.2]{HP}), hence
$R^kh_{B*}(h_*(\omega_X\otimes P_0))\neq 0$. We conclude again by Koll\'ar's theorem
(\cite[Theorem 3.4]{kollar2}) that $\dim X_A-\dim X_B\geq k$ and hence the equality holds and $\dim X_B=\dim X-s-k$.
\end{proof}

 \section{Proof of Theorem \ref{theo}}
\begin{lemm}\label{lemma 1}Let $Y$ be a smooth projective variety of general type and of maximal Albanese dimension.
Let $t: Y\rightarrow A$ be a  morphism to an abelian variety $A$ of dimension $\geq 1$ such that $\dim Y-\dim t(Y)=1$.
Then $$\dim V^0(\omega_Y, t)\geq 1,$$ where $$V^0(\omega_Y, t):=\{P\in\PA\mid H^0(Y, \omega_Y\otimes t^*P)\neq 0\},$$ and $\dim V^0(\omega_Y, t)$
is by definition the maximal dimension of an irreducible component of $V^0(\omega_Y, t)$.
\end{lemm}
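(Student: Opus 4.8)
The plan is to argue by contradiction, assuming $\dim V^0(\omega_Y,t)\le 0$. Since $Y$ has maximal Albanese dimension and is of general type, its Albanese map is generically finite and $q(Y)\ge\dim Y$, so a general top form on $A_Y$ pulls back to a nonzero section of $\omega_Y$; hence $p_g(Y)>0$, the trivial bundle $0=t^*\cO_A$ lies in $V^0(\omega_Y,t)$, and this locus is a nonempty finite set of torsion points. Applying Proposition \ref{fiberdimension} to the component $\{0\}$, which has codimension $\dim A$ in $\widehat A$ (so $k=\dim A$, $s=1$), forces $\dim X_B=\dim Y-1-\dim A\ge 0$. Combined with $\dim A\ge\dim t(Y)=\dim Y-1$ this yields $\dim A=\dim Y-1$ and shows $t$ is surjective. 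Writing $t=u\circ a_Y$ with $u\colon A_Y\to A$ and $K=(\Ker u)^0$, and taking the Stein factorization $Y\xrightarrow{f}Z\xrightarrow{g}A$, the map $f$ becomes a fibration whose general fibre $F$ is a smooth connected curve and $g$ is finite surjective.

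First I would pin down the genus of $F$. Since $a_Y$ is generically finite, $a_Y|_F$ is non-constant, so $F$ maps finitely onto a curve in $A_Y$ and $g(F)\ge 1$. If $g(F)=1$, then $F$ is isomorphic to its image, a translate of a one-dimensional abelian subvariety $E\subseteq A_Y$; as $t$ contracts $F$, this image lies in a translate of $K$, so $E\subseteq K$ up to translation. Because $K$ contains only countably many elliptic subvarieties and $F$ varies in a connected family, $E$ is independent of $F$. Composing with $A_Y\to A_Y/E$ then exhibits $Y$ as fibred in translates of the fixed elliptic curve $E$ over a base of dimension $\dim Y-1$, whence easy addition gives $\kappa(Y)\le\dim Y-1$, contradicting general type. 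Therefore $g(F)\ge 2$.

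This genus bound drives the argument. By Riemann--Roch on $F$ one has $h^0(F,\omega_F\otimes P')=g(F)-1+h^0(F,P'^{\vee})\ge g(F)-1\ge 1$ for every $P'\in\Pic^0(F)$, and since $\omega_Y|_F\cong\omega_F$ while $t^*P|_F\cong\cO_F$ for $P\in\widehat A$, the fibrewise sections never vanish. Translated to $A$, the sheaf $t_*\omega_Y$ is a $GV$-sheaf (as in the proof of Proposition \ref{fiberdimension}) of generic rank $h^0(F,\omega_F)=g(F)\ge 2$, and $V^0(\omega_Y,t)=V^0(t_*\omega_Y)$. If $\chi(t_*\omega_Y)>0$, then generic vanishing gives $h^0(A,t_*\omega_Y\otimes P)=\chi(t_*\omega_Y)>0$ for general $P\in\widehat A$, so $V^0(\omega_Y,t)=\widehat A$ has dimension $\dim A\ge 1$, which already contradicts our assumption.

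The main obstacle is the boundary case $\chi(t_*\omega_Y)=0$. Using $R^1f_*\omega_{Y/Z}\cong\cO_Z$ together with Koll\'ar's decomposition one finds $\chi(t_*\omega_Y)=\chi(\omega_Y)+\chi(\omega_Z)$, so vanishing forces $\chi(\omega_Y)=0$; by the theorem of Ein and Lazarsfeld this means the Albanese image of $Y$ is fibred by a positive-dimensional subtorus $S\subseteq A_Y$. Here I would run a descending induction on $\dim Y$: passing to $A_Y\to A_Y/S$ produces an auxiliary fibration of smaller fibre dimension, and combining the inductive hypothesis with the structure theorem (Theorem \ref{structure}), the generation of $\Pic^0(Y)$ by the components of $V^0(\omega_Y)$ \cite{CH}, and the $M$-regularity/continuous global generation machinery (Theorem \ref{pareschi-popa}) should produce a positive-dimensional component of $V^0(\omega_Y)$ lying over $\widehat A$. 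The delicate point is controlling the interaction between $S$ and $K$, i.e.\ ensuring the component so produced actually meets $\widehat A$ in positive dimension; this is where I expect the real work to be.
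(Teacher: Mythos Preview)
Your setup is reasonable and the easy case $\chi(t_*\omega_Y)>0$ is fine, but the proof is genuinely incomplete in the case $\chi(t_*\omega_Y)=0$. You yourself say that ``this is where I expect the real work to be,'' and indeed the descending induction you sketch is not a proof: you have not specified the inductive statement, the base case, or how the Ein--Lazarsfeld subtorus $S$ interacts with $K=(\Ker u)^0$ well enough to force a positive-dimensional component of $V^0(\omega_Y,t)$. There is no obvious way to make this work; the generation statement of \cite{CH} and the $M$-regularity package tell you about components of $V^0(\omega_Y)\subset\widehat A_Y$, not about their intersection with the smaller subtorus $\widehat\mu(\widehat A)$, and that intersection is precisely what you must control. (A minor technical point: in your Euler-characteristic identity $\chi(t_*\omega_Y)=\chi(\omega_Y)+\chi(\omega_Z)$ you need $Z$ smooth, or at least to justify $R^1f_*\omega_Y\cong\omega_Z$ and $\chi(g_*\omega_Z)\ge 0$ for the normal Stein factor $Z$.)

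The paper handles the hard case quite differently and concretely. Rather than inducting, it works inside $\widehat A_Y$: using that the natural map $V^0(\omega_Y)\hookrightarrow\widehat A_Y\to\widehat K$ is surjective (because $t_*(\omega_Y\otimes P)$ is a nonzero $GV$-sheaf for every $P$), it picks a component $Q+\widehat B$ of $V^0(\omega_Y)$ dominating $\widehat K$. The assumption $\dim\big(\widehat\mu(\widehat A)\cap V^0(\omega_Y)\big)=0$ forces $Q+\widehat B\to\widehat K$ to be finite, hence $\codim_{\widehat A_Y}(Q+\widehat B)=\dim\widehat A$; combined with the fact that this component also lies in $V^{\dim\widehat A}(\omega_Y)$ one gets $\dim A=\dim Y-1$ and an isogeny $A_Y\to B\times A$. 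The key move is then to fibre $Y$ over $B$ (a curve!) via $p_B$, take the Stein factorization $Y\xrightarrow{\gamma}Z\to B$, and apply Chen--Hacon to the general fibre of $\gamma$, which is of general type and maximal Albanese dimension over $A$. This produces a positive-dimensional $T\subset V^0(\omega_F,(\mu\circ a_Y)|_F)$, and nefness of $\gamma_*(\omega_{Y/Z}\otimes a_Y^*\mu^*P)$ plus Riemann--Roch on the curve $Z$ (when $g(Z)\ge 2$) or \cite[Proposition~3.6]{CDJ} (when $Z$ is elliptic) pushes $T$ up to a positive-dimensional piece of $\widehat\mu(\widehat A)\cap V^0(\omega_Y)$, giving the contradiction. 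The essential idea you are missing is this reduction to a fibration over a \emph{curve}, where positivity of direct images becomes numerically effective.
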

\begin{proof}By the universal property of the Albanese moprhism $a_Y$,
we have a commutative diagram
\begin{eqnarray*}
\xymatrix{
Y\ar[r]^{a_Y}\ar[dr]_t & A_Y\ar[d]^{\mu}\\
& A.}
\end{eqnarray*}

Since $K_Y$ is an effective divisor (see \cite[Lemma 3.1]{CH1} or \cite[Lemma 2.3]{JLT}), we may assume that $\mu$ is surjective,
otherwise $\ker(\widehat{\mu}:\PA\rightarrow\PA_Y)$ is contained in
$V^0(\omega_X, t)$ and is of dimension $\geq 1$.

When $\mu$ is surjective, $\widehat{\mu}:
\PA\rightarrow \PA_X$ is an isogeny onto its image and $$\dim
V^0(\omega_Y, t)=\dim(\widehat{\mu}(\PA)\cap V^0(\omega_Y)).$$ Hence
if $V^0(\omega_Y)=\PA_Y$, Lemma \ref{lemma 1} is clear. Otherwise,
$\chi(Y, \omega_Y)=0$ and $V^0(\omega_Y)$ is a union of torsion
translates of proper abelian subvarieties of $\PA_Y$ and $\dim
V^0(\omega_Y)\geq 1$ (see \cite[Theorem 1]{CH}).

Let $K$ be the neutral component of the kernel of
$A_{Y}\xrightarrow{\mu} A$. We then have the exact  sequence
$$\widehat{A}\xrightarrow{\widehat{\mu}} \PA_{Y}\twoheadrightarrow
\PK.$$ Because an irreducible component of a general fiber of $t$ is
a smooth curve of genus $\geq 2$, hence for any $P\in \PA_{Y}$,
$t_{*}(\omega_X\otimes P)$ is a non-zero $GV$-sheaf on $A$ by \cite[Corollary 4.2]{Hac}\footnote{Hacon shows that $t_*\omega_X$ is a $GV$-sheaf
and the twisted version follows the same way by combining \cite[Theorem 1.2]{Hac} and \cite[Corollary 10.15]{K1}.}.
Therefore for any $P\in\PA_{Y}$, there exists a line bundle
$P'\in\widehat{A}$ such that $$H^0(Y, \omega_{Y}\otimes
P \otimes t^*P')\simeq
H^0(A,t_*(\omega_Y\otimes P)\otimes P')\neq
0.$$

Hence the composition of morphisms
$V^0(\omega_Y)\hookrightarrow\PA_Y\rightarrow \PK$ is surjective.
There exists an irreducible component  $Q+\PB$ of $V^0(\omega_{Y})$,
where $Q$ is a torsion point of $\PA_Y$ and $\PB$ is an abelian
subvariety of $\PA_Y$, such that the composition of the natural
morphisms $$Q+\PB\hookrightarrow \PA_{Y}\rightarrow \PK$$ is
surjective and then $(Q+\PB)\cap \widehat{\mu}(\PA)$ is not empty.

If $\dim V^0(\omega_Y, t)=0$, then
\begin{eqnarray}\label{dim 0}\dim (\widehat{\mu}(\PA)\cap V^0(\omega_Y))=0.\end{eqnarray}
Hence $$\dim ((Q+\PB)\cap \widehat{\mu}(\PA))=0,$$ and the projection $Q+\PB\rightarrow \PK$ is finite. Therefore,
$\codim_{\PA_Y}(Q+\PB)= \dim\PA\geq \dim Y-1$.

On the other hand, we know by the proof of Theorem 3 in \cite{el}
that $Q+\PB$ is also an irreducible component of $V^{\dim
\PA}(\omega_{Y})$. Hence $\dim \PA=\dim Y-1$ and the natural
morphism $A_Y\xrightarrow{(p, \mu)} B\times A$ is an isogeny. We
then consider
\begin{eqnarray*}
\xymatrix{Y\ar[drr]_{p_B}\ar[rr]^{a_{Y}}&& A_{Y}\ar[d]^{p}\ar[r]^{\mu}&A \\
&& B.}
\end{eqnarray*}
By Proposition \ref{fiberdimension}, the image $p_B(Y)$ is of dimension $1$. We take the
Stein factorization of $p_B$: $$Y\xrightarrow{\gamma} Z\rightarrow
B.$$ Since a general fiber $F$ of $\gamma$ is of general type and
$(\mu\circ a_Y)|_{F}: F\rightarrow A$ is generically finite over its image, by \cite[Theorem 1]{CH}, there exists a positive dimensional component
$T\subset V^0(\omega_F, (\mu\circ a_Y)|_{F})$. Moreover, if $P\in T$ is a torsion line bundle,
$\gamma_*(\omega_{Y/Z}\otimes a_{Y}^*\mu^*P)$ is a non-zero nef
vector bundle on $Z$ (see \cite[Corollary 3.6]{V} \footnote{This is stated there for $P$ to be trivial, and the general case follows by the \'etale covering trick.}).

If $Z$ is a smooth curve of genus $\geq 2$, then by Riemann-Roch, we
deduce that $$H^0(Y, \omega_{Y}\otimes a_{Y}^*\mu^*P)\neq 0,$$ for any torsion line bundle $P\in T$. Hence by semi-continuity,
$$\widehat{\mu}(T)\subset \widehat{\mu}(\widehat{A})\cap
V^0(\omega_{Y})\hookrightarrow \PA_{Y},$$ which contradicts
(\ref{dim 0}).

If $Z$ is an elliptic curve, then since $Z$ generates $B$, $Z$ is
isogenous to $B$. Then $Q+\PB$ is an irreducible component of
$V^0(\omega_Y)$ of dimension $1$. This is impossible by
Proposition 3.6 in \cite{CDJ}.

We then conclude the proof of Lemma \ref{lemma 1}.
\end{proof}

\begin{prop}\label{main}Let $X$ be a smooth projective variety of general type.
Assume that $X$ is of Albanese fiber dimension $1$. Then $\dim V^0(\omega_X)\geq 1$.
\end{prop}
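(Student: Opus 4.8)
The plan is to argue by contradiction and to reduce everything to the case where $a_X$ is surjective, where a short computation with Koll\'ar's theorem yields a contradiction. Since $X$ is of general type we have $p_g(X)>0$, so $\cO_{A_X}\in V^0(\omega_X)$ and $V^0(\omega_X)\neq\varnothing$; the point is to produce a component of positive dimension. Passing to the Stein factorisation $X\to Z\to A_X$ of $a_X$ over its image, the general fibre $F$ of $X\to Z$ is a smooth curve, and easy addition $\kappa(X)\le\kappa(F)+\dim a_X(X)$ together with $\kappa(X)=\dim X$ and $\dim a_X(X)=\dim X-1$ gives $\kappa(F)\ge1$, i.e.\ $g(F)\ge2$. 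Recall also that $a_{X*}\omega_X$ is a $GV$-sheaf on $A_X$ by \cite[Corollary 4.2]{Hac} and that $V^0(\omega_X)=V^0(a_{X*}\omega_X)$.

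Now suppose, for contradiction, that $\dim V^0(\omega_X)=0$. Then every irreducible component of $V^0(\omega_X)$ is a (torsion) point; in particular $\{\cO_{A_X}\}$ is a component, of codimension $k=\dim A_X$ in $\PA_X$. I would feed this component into Proposition \ref{fiberdimension}, applied to $\alpha=a_X$ with $s=\dim X-\dim a_X(X)=1$, $P_0=\cO_{A_X}$ and $\PB=0$: it produces a fibration $f\colon X\to X_B$ with $\dim X_B=\dim X-s-k=\dim X-1-\dim A_X$. As $\dim X_B\ge0$ this forces $\dim A_X\le\dim X-1$, while $\dim A_X\ge\dim a_X(X)=\dim X-1$; hence $\dim A_X=\dim X-1$ and $a_X$ is surjective. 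A surjective Albanese morphism has connected fibres (in the Stein factorisation above $Z\to A_X$ is finite and induces an isomorphism on Albanese varieties, hence is an isomorphism), so $a_{X*}\cO_X=\cO_{A_X}$.

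Write $n=\dim X$, so that $\dim A_X=n-1$. On the one hand, since $a_{X*}\omega_X$ is a $GV$-sheaf and $\{\cO_{A_X}\}$ is a component of $V^0(\omega_X)$ of codimension $n-1=\dim A_X$, Lemma \ref{lemmaPP} shows that $\{\cO_{A_X}\}$ is a component of $V^{n-1}(a_{X*}\omega_X)$, whence $h^{n-1}(A_X,a_{X*}\omega_X)\neq0$. On the other hand, Koll\'ar's decomposition \cite[Theorem 3.4]{kollar2} gives $H^{n-1}(X,\omega_X)=\bigoplus_{i\ge0}H^{n-1-i}(A_X,R^ia_{X*}\omega_X)$, whose $i=0$ summand is exactly $H^{n-1}(A_X,a_{X*}\omega_X)$, and I claim this summand vanishes. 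Indeed, by Serre duality on $X$ the total dimension is $h^{n-1}(X,\omega_X)=h^1(X,\cO_X)=q(X)=\dim A_X=n-1$. The natural trace map $R^1a_{X*}\omega_X\to\cO_{A_X}$ is an isomorphism over the open set where $a_X$ is smooth of relative dimension one; since the locus of fibres of dimension $\ge2$ has image of codimension $\ge2$ in $A_X$, and $R^1a_{X*}\omega_X$ is torsion-free by Koll\'ar, this trace is injective with cokernel supported in codimension $\ge2$. Consequently $H^{n-2}(A_X,R^1a_{X*}\omega_X)\twoheadrightarrow H^{n-2}(A_X,\cO_{A_X})$, so the $i=1$ summand has dimension $\ge\binom{n-1}{n-2}=n-1$. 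As the total dimension equals $n-1$ and all summands are nonnegative, every summand except $i=1$ must vanish; in particular $H^{n-1}(A_X,a_{X*}\omega_X)=0$. This contradicts $h^{n-1}(A_X,a_{X*}\omega_X)\neq0$, and therefore $\dim V^0(\omega_X)\ge1$.

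The reduction to the surjective case through Proposition \ref{fiberdimension} is clean, so I expect the real work to lie in the final vanishing $H^{n-1}(A_X,a_{X*}\omega_X)=0$; the delicate point is not a new geometric idea but the bookkeeping around the trace map $R^1a_{X*}\omega_X\to\cO_{A_X}$ when $a_X$ is not equidimensional. I would verify that the locus of higher-dimensional Albanese fibres indeed has codimension $\ge2$ in $A_X$ (so that the trace is an isomorphism in codimension one and $R^1a_{X*}\omega_X$ agrees with $\cO_{A_X}$ there), and use the torsion-freeness of Koll\'ar's sheaves to conclude that the cokernel lives in codimension $\ge2$; if this proves awkward one can instead first replace $X$ by a birational model on which $a_X$ is equidimensional. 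Granting this, the decomposition forces all of $H^{n-1}(X,\omega_X)$ into the $R^1$-summand, leaving nothing for the $a_{X*}\omega_X$-summand, which is the desired contradiction.
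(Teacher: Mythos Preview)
Your overall strategy---reduce to $\dim A_X=n-1$ and then squeeze $H^{n-1}(X,\omega_X)$ via Koll\'ar's decomposition---matches the paper's, but two substantive steps are missing. The first is the claim that a surjective Albanese morphism has connected fibres: this is false. If $C_i\to E_i$ ($i=1,2$) are ramified double covers of elliptic curves with covering involutions $\iota_i$, then a resolution $S$ of $(C_1\times C_2)/\langle(\iota_1,\iota_2)\rangle$ has $q(S)=2$ and its Albanese map is the generically $2$-to-$1$ morphism $S\to E_1\times E_2$; your parenthetical (``finite and isomorphism on Albanese $\Rightarrow$ isomorphism'') fails for exactly this $S\to E_1\times E_2$. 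The paper's Step~1 proves connectedness \emph{using the contradiction hypothesis}: since $\dim V^0(a_{X*}\omega_X)=0$ and $a_{X*}\omega_X$ is a nonzero $GV$-sheaf, it is a homogeneous vector bundle on $A_X$ by Mukai, hence has degree $0$ on a general complete-intersection curve $C\subset A_X$; if the finite part $W\to A_X$ of the Stein factorisation had degree $>1$ it would be ramified (by universality of $a_X$), and a Riemann--Roch computation over $C$ forces $\deg(a_{X*}\omega_X|_C)>0$, a contradiction. Without connected fibres your trace-map analysis of $R^1a_{X*}\omega_X$ collapses, since this sheaf then has generic rank $\deg(W\to A_X)>1$.

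The second gap is the assertion $p_g(X)>0$: ``general type'' alone does not give this (Godeaux surfaces), and nothing in your reduction supplies it. If the isolated points of $V^0(\omega_X)$ are all non-trivial torsion points $P_0$, then Lemma~\ref{lemmaPP} only yields $h^{n-1}(A_X,a_{X*}\omega_X\otimes P_0)>0$, while your Koll\'ar count bounds $h^{n-1}(A_X,a_{X*}\omega_X)$ at the \emph{untwisted} point; these are not in conflict. The paper's Step~2 is devoted precisely to this: writing the homogeneous bundle as $a_{X*}\omega_X=\bigoplus_i\cV_{P_i}$, if some $P_i\neq\cO_{A_X}$ then $h^{n-1}(X,\omega_X\otimes P_i^{-1})>0$, so the \'etale cover $\widetilde X\to X$ trivialising the $P_i$ satisfies $q(\widetilde X)>q(X)$; one checks $\widetilde X$ is then of maximal Albanese dimension and Lemma~\ref{lemma 1} gives $\dim V^0(\omega_{\widetilde X},\widetilde a_X)\ge 1$, contradicting $\dim V^0(\omega_X)=0$. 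Only after all $P_i$ are trivial does the Koll\'ar count $n-1=h^{n-1}(X,\omega_X)\ge \sum_i h^{n-1}(A_X,\cV_i)+(n-1)\ge s+(n-1)$ produce the final contradiction, and this last step is indeed essentially what you wrote.
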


\begin{proof}
We take  the Stein factorization of $a_X$:
\begin{eqnarray*}
\xymatrix{
X\ar[d]_g \ar[dr]^{a_X} \\
W\ar[r]^f& A_X.}
\end{eqnarray*}
We denote $\dim X=n\geq 2$ and denote by $m\geq 2$ the genus of a general fiber of $g$.

Since $X$ is of Albanese fiber dimension $1$ and is of general type, $a_{X*}\omega_X$ is a non-zero coherent sheaf on $A_X$ and hence is a non-zero
$GV$-sheaf by \cite[Corollary 4.2]{Hac}. Hence by \cite[Corollary 3.2]{Hac}, we have $$V^0(a_{X*}\omega_X)\supset V^1(a_{X*}\omega_X)\supset\cdots
V^n(a_{X*}\omega_X).$$
Therefore $V^0(\omega_X)=V^0(a_{X*}\omega_X)\neq \emptyset$ by \cite[Lemma 2.1]{CH}.

We then argue by contradiction. Assume that
$V^0(\omega_X)=V^0(a_{X*}\omega_X)$ is a union of finite number of points.
Since $a_{X*}\omega_X$ is a $GV$-sheaf and $V^0(a_{X*}\omega_X)$ is a union of finite number of points,
by \cite[Example 3.2]{muk2}, $a_{X*}\omega_X$ is a homogeneous
vector bundle on $A_X$. Hence $V^{\dim A_X}(a_{X*}\omega_X)\neq
\varnothing$, therefore $\dim A_X=\dim a_X(X)=n-1$.

There are three steps to deduce a contradiction.

First step, we claim that $a_X$ is a fibration. Hence $f$ is an isomorphism and $R^1a_{X*}\omega_X=\cO_{A_X}$.

We take $n-2$ general very ample divisors $H_i$, $1\leq i\leq n-2$
on $A_X$. Let the smooth curve $C$ be the intersection of $H_i$ on
$A_X$. By performing a base change via the morphism $C\hookrightarrow A_X$ to the above
diagram, we have
\begin{eqnarray*}\label{diagram1}
\xymatrix{
X_C\ar[d]_{g_C} \ar[dr]^{h_C} \\
W_C\ar[r]^{f_C}& C.}
\end{eqnarray*}
By a Bertini type theorem (see \cite[Theorem 1.7.1]{BS}), both $X_C$ and $W_C$ are smooth projective varieties.
Then $h_{C*}\omega_{X_C/C}=a_{X*}\omega_X|_C$.

If $\deg f>1$, since $a_X$ is the universal morphism to an abelian
variety, $f$ is ramified in codimension $1$. Hence $f_C:
W_C\rightarrow C$ is a ramified cover. We know that
$g_{C*}\omega_{X_C/W_C}$ is a nef vector bundle on $W_C$ (see
\cite{V}). Then by Riemann-Roch, we conclude that
\begin{eqnarray*}\deg h_{C*}\omega_{X_C/C}=\deg f_{C*}\big((g_{C*}\omega_{X_C/W_C})\otimes \omega_{W_C/C}\big)>0
\end{eqnarray*}
which contradicts the fact that $\deg (a_{X*}\omega_X|_C)=0$. We finish the proof of the first step.

We then write $a_{X*}\omega_X=\oplus_{i=1}^s\cV_{P_i}$, where
$\cV_{P_i}$ is the tensor product of $P_i\in\Pic^0(A_X)$ with a
unipotent vector bundle on $A_X$ and $\sum_{i=1}^s\rank(\cV_i)=m$.
Notice that each $P_i$ is a torsion line bundle (see \cite{sim}).

Second step, we claim that $P_i=\cO_X$ for each $i$. In other words, each $\cV_i$ is a unipotent vector bundle.

We still argue by contradiction to prove this claim.

If there exists a non-trivial $P_i$, then $H^{n-1}(A_X, \cV_{P_i}\otimes P_i^{*})\neq 0$ and hence $H^{n-1}(X, \omega_X\otimes P_i^*)\neq 0$.
 Take the \'etale cover $\pi:\wA_X\rightarrow A_X$ induced by the group generated by all $P_j$. Let $\pi_X: \wX\rightarrow X$ be the induced \'etale cover.
We notice that
\begin{eqnarray*}q(\wX)=h^{n-1}(\wX, \omega_{\wX})\geq h^{n-1}(X, \omega_X)+h^{n-1}(X, \omega_X\otimes P_i^*)>q(X).
\end{eqnarray*}
We have the following commutative diagram:
\begin{eqnarray*}
\xymatrix{
&&\wX\ar@/_2pc/[ddll]_{a_{\wX}}\ar[dl]^{\tau}\ar[r]^{\pi_X}\ar[dd]^{\wa_X} & X\ar[dd]^{a_X}\\
&a_{\wX}(\wX)\ar[dr]^{\upsilon}\ar@{^{(}->}[dl]\\
A_{\wX}\ar[rr]^{\mu}&&\wA_X\ar[r]^{\pi} & A_X.}
\end{eqnarray*}
Since $\wa_X$ is a fibration, either $\tau$ is generically finite
and $\wX$ is then of maximal Albanese dimension, or $\tau$ is a
fibration and $\upsilon$ is an isomorphism. The latter is
impossible, since $a_{\wX}(\wX)$ generates the whole abelian variety
$A_{\wX}$. In the first case, $\wX$ is of maximal Albanese dimension
and $\wa_X$ is surjective of relative dimension $1$. We then apply
Lemma \ref{lemma 1} and deduce that $\dim V^0(\omega_{\wX},
\wa_X)\geq 1$. This is impossible because $\dim V^0(\omega_{\wX},
\wa_X)=\dim V^0(\omega_X)=0$.

We then conclude the proof of the second step.

In the last step, we are going to deduce a contradiction.
Since $\cV_i$ is a unipotent vector bundle, $h^{n-1}(A_X, \cV_i)\geq 1$. We then have (\cite[Theorem 3.1]{kollar2})
\begin{eqnarray*}n-1=h^{n-1}(X, \omega_X)&=&h^{n-1}(A_X, a_{X*}\omega_X)+h^{n-2}(A_X, R^1a_{X*}\omega_X)\\
&=& \sum_{i=1}^s h^{n-1}(A_X, \cV_i)+h^{n-2}(A_X, \cO_{A_X})\\
&\geq & s+n-1,
\end{eqnarray*}
which is impossible.

We then conclude the proof of Proposition \ref{main}.
\end{proof}

We are now able to prove a slightly more general version of Theorem \ref{theo}:
\begin{theo}\label{maintheorem1}
 Let $X$ be a smooth projective variety of general type and let $\alpha: X\rightarrow A$ be a morphism to an abelian variety such that $\dim \alpha(X)=\dim X-1$.
Then the translates through $0$ of all irreducible components of $V^0(\omega_X, \alpha)$ generate $\PA$.
\end{theo}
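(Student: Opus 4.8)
The plan is to argue by contradiction and to reduce the generation statement to a positive-dimensionality statement for the image of $V^0(\omega_X,\alpha)$ in a suitable quotient of $\PA$; that positive-dimensional image will then be produced by fibering $X$ over an abelian variety and spreading out sections found on a general fibre. Suppose the translates through $0$ of all components of $V^0(\omega_X,\alpha)$ generate a proper abelian subvariety $\PB\subsetneq\PA$. Dualizing $0\to\PB\to\PA\to\PC\to0$, where $\PC:=\PA/\PB\neq0$, yields $0\to C\to A\xrightarrow{\pi}B\to0$ with $C=\widehat{\PC}$, $B=\widehat{\PB}$ and with $\Pp:\PB\hookrightarrow\PA$ the given inclusion. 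The hypothesis on $\PB$ is exactly the statement that every component of $V^0(\omega_X,\alpha)$ maps to a single point under $\PA\twoheadrightarrow\PC$; hence it will suffice to prove that the image of $V^0(\omega_X,\alpha)$ in $\PC$ is positive-dimensional. Set $\phi=\pi\circ\alpha\colon X\to B$ and let $F$ be a general fibre (so $F=X$ if $\PB=0$).

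First I would record the geometry of $F$. By Iitaka's easy addition $\kappa(F)\geq\dim F$, so $F$ is of general type; and since $\dim\alpha(F)=\dim\alpha(X)-\dim\phi(X)=\dim F-1$, the restriction $\alpha|_F\colon F\to C$ (which lands in a translate of $C$) again has fibre dimension one. I then claim $\dim V^0(\omega_F,\alpha|_F)\geq1$ as a subset of $\PC=\widehat C$. If $\alpha(F)$ fails to generate $C$, say it generates a proper subvariety $C_F\subsetneq C$, then the nontrivial kernel of $\widehat C\to\widehat{C_F}$ already sits inside $V^0(\omega_F,\alpha|_F)$ and is positive-dimensional. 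If $\alpha(F)$ generates $C$, then factoring $\alpha|_F$ through the Albanese morphism of $F$ and applying Lemma \ref{lemma 1} (when $F$ is of maximal Albanese dimension) or Proposition \ref{main} (when $F$ has Albanese fibre dimension one) gives a positive-dimensional component; organizing this step as an induction on $\dim A$, after corestricting to $C_F$, lets Proposition \ref{main} and Lemma \ref{lemma 1} carry the genuinely irreducible case $\PB=0$.

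For the spreading-out, fix a positive-dimensional component $T\subseteq\PC$ of $V^0(\omega_{F_\eta},\alpha|_{F_\eta})$ for the generic fibre $F_\eta$ of $\phi$. For a torsion point $Q\in T$ with a torsion lift $\widetilde Q\in\PA$, consider $\phi_*(\omega_X\otimes\alpha^*\widetilde Q)$ on $B$. Its generic rank equals $h^0(F,\omega_F\otimes(\alpha|_F)^*Q)>0$, so the sheaf is nonzero; since $\widetilde Q$ is torsion it is a $GV$-sheaf by \cite[Corollary 4.2]{Hac}, and a nonzero $GV$-sheaf on an abelian variety has nonempty $V^0$ by \cite[Lemma 2.1]{CH}. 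Thus there is $R\in\PB=\widehat B$ with $H^0(X,\omega_X\otimes\alpha^*\widetilde Q\otimes\phi^*R)\neq0$, and as $\phi^*R=\alpha^*\Pp(R)$ with $\Pp(R)\in\PB$, the point $\widetilde Q+\Pp(R)$ lies in $V^0(\omega_X,\alpha)$ and maps to $Q$ under $\PA\twoheadrightarrow\PC$. Letting $Q$ run over the dense set of torsion points of $T$ shows that the image of $V^0(\omega_X,\alpha)$ in $\PC$ contains $T$, hence is positive-dimensional, contradicting the choice of $\PB$. This proves the theorem, and Theorem \ref{theo} is the special case $\alpha=a_X$.

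The two steps I expect to be the main obstacles are the following. First, transferring the bound $\dim V^0\geq1$ from the Albanese morphism $a_F$, where Proposition \ref{main} and Lemma \ref{lemma 1} apply, to the given map $\alpha|_F$ when $F$ itself has Albanese fibre dimension one: here $V^0(\omega_F,\alpha|_F)$ is only the pullback of $V^0(\omega_F)$ along an inclusion of subtori, and a priori this intersection could drop to dimension $0$, so the induction on $\dim A$ (or a direct analysis mirroring the proof of Proposition \ref{main}) must be invoked carefully. Second, making the spreading uniform, namely guaranteeing a single positive-dimensional family $T\subseteq\PC$ lying in $V^0$ of the \emph{generic} fibre rather than in the a priori varying $V^0$ of individual general fibres; this requires controlling the relative cohomological support locus of the family $\{F_b\}_{b\in B}$ via constructibility and base change for $R^0\phi_*$.
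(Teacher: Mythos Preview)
Your overall architecture matches the paper's: argue by contradiction, pass to a quotient abelian variety, and use the $GV$-property of $\phi_*(\omega_X\otimes\alpha^*\widetilde Q)$ to relate $V^0(\omega_X,\alpha)$ to $V^0(\omega_F,\alpha|_F)$ for a general fiber $F$. Your spreading-out step is the paper's observation that the map $\PA\to\PC$ carries $V^0(\omega_X,\alpha)$ onto $V^0(\omega_F,h)$; one fixed sufficiently general fiber suffices (the paper just says this), so your second ``obstacle'' is a routine constructibility/semicontinuity matter rather than a real difficulty.

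The genuine gap is the one you flag yourself: when $F$ has Albanese fiber dimension one, Proposition~\ref{main} only gives $\dim V^0(\omega_F)\geq1$, not $\dim V^0(\omega_F,\alpha|_F)\geq1$, and the latter is what you need. Your proposed fix by induction on $\dim A$ does not close this: the induction only fires when $\PB\neq0$ (so that $\dim C<\dim A$), and the residual case $\PB=0$, where $F=X$ and $C=A$, is precisely the assertion $\dim V^0(\omega_X,\alpha)\geq1$ that you are trying to prove. You are thrown back on a direct argument, and neither ``corestricting to $C_F$'' nor rerunning the proof of Proposition~\ref{main} will produce one without a new idea.

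The paper supplies this idea, and the missing ingredient is Proposition~\ref{fiberdimension}. Assume $\dim V^0(\omega_F,h)=0$ with $h=\alpha|_F\colon F\to K$ and $F$ of Albanese fiber dimension one. Factor $h=\pi\circ a_F$; then $\Pp\colon\PK\to\PA_F$ is an isogeny onto its image and $\Pp(\PK)\cap V^0(\omega_F)$ is finite. By Proposition~\ref{main}, $\dim V^0(\omega_F)\geq1$, so $\Pp$ is not surjective. As in the proof of Lemma~\ref{lemma 1}, the composition $V^0(\omega_F)\hookrightarrow\PA_F\to\PA_F/\Pp(\PK)$ is surjective; combined with the finiteness of $\Pp(\PK)\cap V^0(\omega_F)$ this yields an irreducible component $P_0+\PB'$ of $V^0(\omega_F)$ whose codimension in $\PA_F$ equals $\dim\Pp(\PK)\geq\dim h(F)=\dim F-1$. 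Now apply Proposition~\ref{fiberdimension} to $a_F$ with $s=1$ and $k=\codim(P_0+\PB')$: the induced map $F\to A_F\to B'$ has image of dimension $\dim F-1-k\leq0$, contradicting the fact that $a_F(F)$ generates $A_F$ while $B'$ is a nontrivial quotient of $A_F$. This is the step your proposal is missing.
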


\begin{proof}
By Theorem \ref{structure}, we may write $$V^0(\omega_X, \alpha)=\cup_i(Q_i+\PB_i),$$
where $Q_i\in\PA$ is a torsion point and $\PB_i$ is an abelian subvariety of $\PA$ for each $i$.
Let $\PT\hookrightarrow \PA$ be the abelian subvariety generated by all $\PB_i$'s.

Assume that $\PT$ is a proper subvariety of $\PA$. We consider the dual and get
\begin{eqnarray*}
\xymatrix{F\ar@{^{(}->}[d]\ar[rr]^h&& K \ar@{^{(}->}[d]^j\\
X\ar[dr]_f\ar[r]& \alpha(X)\ar[d]\ar@{^{(}->}[r]& A\ar[d]^p\\
& f(X)\ar@{^{(}->}[r]& T.}
\end{eqnarray*}
Let $F$ be an irreducible component of a general fiber of $f$,
$K$ be the translate of the kernel of $p$ containing $\alpha(F)$, and $h: F\rightarrow K$ be the
restriction of $\alpha$ to $F$. Notice that
$\dim F-\dim
h(F)=1$. We now study the locus $$V^0(\omega_F, h):=\{P\in\PK\mid
H^0(F, \omega_F\otimes h^*P)\neq 0\}.$$

We know that for any $P\in \PA$, $f_*(\omega_X\otimes P)$ is a
$GV$-sheaf (possible $0$) on $T$. Hence $V^0(\omega_X, \alpha)$ is mapped onto
 $V^0(\omega_F, h)$ via the morphism $\PA\xrightarrow{\widehat{j}} \PK$.
Since $\PT$ is generated by all $\PB_i$'s, for each irreducible component $Q_i+\PB_i$ of $V^0(\omega_X, \alpha)$,
$\widehat{j}(Q_i+\PB_i)$ is a torsion point of $\PK$.
Hence $V^0(\omega_F, h)$ is a
union of finite number of points.

On the other hand, we consider
\begin{eqnarray*}\xymatrix{F\ar@/^2pc/[rr]^h\ar[r]_{a_F} &A_F\ar@{->}[r]_{\pi}&  K.}\end{eqnarray*}
Since $\dim V^0(\omega_F, h)=0$, then $\Pp(\PK)\cap V^0(\omega_F)$ is a union of finite number of points in
$\PA_F$ and $\Pp: \PK\rightarrow \PA_F$ is an isogeny onto its image. By Proposition \ref{main}, $\Pp$ is not surjective.
Since an irreducible componenet of a general fiber of $h$ is a smooth projective
curve of genus $>1$, as we have seen in the proof of Lemma \ref{lemma 1},
the composition of morphisms
 $V^0(\omega_F)\hookrightarrow \PA_F\rightarrow \PA_F/\Pp(\PK)$ is surjective. Hence there exists a positive dimensional irreducible component $P_0+\PB$
 of $V^0(\omega_F)$ of codimension
equal to $\dim \Pp(\PK)\geq \dim F-1$.

If $a_F$ is generically finite, we can simply apply Lemma \ref{lemma 1}
to get a contradiction. If $F$ is of Albanese fiber dimension $1$, we apply Proposition \ref{fiberdimension} to conclude that the image of
the natural morphism $F\xrightarrow{a_F} A_F\twoheadrightarrow  B$ is a point, which is impossible since $a_F$ is the Albanese morphism of $F$.

Hence $\PT=\PA$ and we are done.
\end{proof}

For surfaces $X$ of Albanese fiber dimension $1$ and of general type, since $\chi(X,
\omega_X)>0$ (see \cite[Theorem X.4]{B}), we always have $V^0(\omega_X, a_X)=\Pic^0(X)$. In
higher dimensions, we can construct varieties of Albanese fiber
dimension $1$ and of general type with $V^0(\omega_X, a_X)$ a proper
subset of $\PA_X$ in the same way as in the construction of Ein-Lazarsfeld
threefold \cite[Example 1.13]{el}.

\begin{exam}\label{negative-char}
Let $C_i\rightarrow E_i$ be ramified double covers over elliptic curves $E_i$ for $i=1, 2$,
with associated involutions $\iota_i$. Let $C_3\rightarrow
\mathbb{P}^1$ be a ramified double cover, with associated involution
$\iota_3$. Let $X$ be a desingularization of the quotient
$(C_1\times C_2\times C_3)/(\iota_1, \iota_2, \iota_3)$. Hence $X$
is of general type and the natural morphism $X\rightarrow E_1\times
E_2$ is the Albanese morphism of $X$, hence $X$ is of Albanese fiber
dimension $1$. Moreover,
$$V^0(\omega_X, a_X)=(\{\cO_{E_1}\}\times\PE_2)\cup (\PE_1\times\{\cO_{E_2}\}).$$

We can check that $\chi(X, \omega_X)=-(g(C_1)-1)(g(C_2)-1)<0$, while the holomorphic Euler characteristic is always $\geq 0$ for varieties of maximal Albanese dimension (\cite{gl1}). Furthermore, we see that the intersection of the two irreducible components of $V^0(\omega_X)$ is $0$, while we know that if $Y$ is a variety of general type and of maximal Albanese dimension, then the intersection of any two maximal components of $V^0(\omega_Y)$ is of dimension $\geq 1$ (\cite[Proposition 1.7]{jia}).
 \end{exam}
\section{Non-general type case}
\subsection{}
In this subsection, we assume that $X$ is a smooth projective variety of Kodaira dimension $\kappa(X)\geq 0$ and  $\alpha: X\rightarrow A$ is a
morphism from $X$ to an abelian variety.
We denote by $I_X: X\rightarrow I(X)$ a model of the Iitaka fibration of $X$ with $I(X)$ a smooth projective variety. Let $F$ be a general fiber of $I_X$.
Then $F$ is a smooth projective variety of Kodaira dimension $0$. By Kawamata's theorem (\cite[Theorem 1]{kaw}),
 $\alpha(F)$ is a translate of an abelian subvariety $K$ of $A_X$ (independent of the choice of $F$).

 We consider the following commutative diagram:
\begin{eqnarray*}
\xymatrix{
X\ar[r]^{\alpha}\ar[d]_{I_X}& A\ar[d]^{\pr}\\
I(X)\ar[r]^{\alpha_I}& A/K.}
\end{eqnarray*}
The following lemma is probably known to experts, but for lack of references, we give a proof here.
\begin{lemm}\label{V0K0}
 For any irreducible component $Q+\PB$ of $V^0(\omega_X, \alpha)$,
where $Q$ is a torsion point of $\PA$ and $\PB$ is an abelian subvariety of $\PA$, we have $\PB\subset
\widehat{\pr}(\widehat{A/K})$.  In particular, if the translates through $0$ of all irreducible
components of $V^0(\omega_X, \alpha)$ generate $\PA$, then $\alpha$ factors  through the Iitaka fibration $I_X$ of $X$.
\end{lemm}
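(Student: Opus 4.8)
The plan is to reformulate the desired inclusion as a triviality-on-$K$ statement and then test it on a general fibre of the Iitaka fibration. Write $\iota\colon K\hookrightarrow A$ for the inclusion and $\widehat{\iota}\colon \PA\to\PK$ for its dual, i.e. the restriction map $P\mapsto P|_K$. Dualizing the exact sequence $0\to K\xrightarrow{\iota}A\xrightarrow{\pr}A/K\to 0$ gives $0\to\widehat{A/K}\xrightarrow{\widehat{\pr}}\PA\xrightarrow{\widehat{\iota}}\PK\to 0$, so that $\widehat{\pr}(\widehat{A/K})=\Ker(\widehat{\iota})$. Hence it suffices to prove $\widehat{\iota}(\PB)=0$, namely that every $P\in\PB$ restricts to the trivial bundle on $K$.

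First I would restrict sections to a general fibre. Fix a general $P\in Q+\PB$ and a nonzero $s\in H^0(X,\omega_X\otimes\alpha^*P)$, and let $D=\operatorname{div}(s)$; note that $\alpha^*P$ is torsion since $Q$ (hence $P$) is, so $D$ is an honest effective divisor. Decompose $D=D_h+D_v$ into its $I_X$-horizontal and $I_X$-vertical parts. For a general point of $I(X)$ the fibre $F$ is smooth, is not contained in $\Supp D_v$, and meets each component of $D_h$ properly (each such component dominates $I(X)$); hence $F\not\subseteq\Supp D$ and $s|_F\neq 0$. Since $F$ is a fibre of the morphism $I_X$ to the smooth variety $I(X)$, its normal bundle is trivial and adjunction gives $\omega_X|_F\cong\omega_F$. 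Writing $\alpha(F)=a+K$ and $\bar\alpha:=t_{-a}\circ(\alpha|_F)\colon F\to K$, and using that translations act trivially on $\Pic^0$, we get $(\alpha|_F)^*P\cong\bar\alpha^*\big(\widehat{\iota}(P)\big)$. Thus $0\neq s|_F\in H^0\big(F,\omega_F\otimes\bar\alpha^*(\widehat{\iota}(P))\big)$, i.e. $\widehat{\iota}(P)\in V^0(\omega_F,\bar\alpha)$. As this holds for general $P$ in the component, taking closures yields $\widehat{\iota}(Q)+\widehat{\iota}(\PB)\subseteq V^0(\omega_F,\bar\alpha)$.

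It then remains to see that $V^0(\omega_F,\bar\alpha)$ is finite, which is where the hypothesis $\kappa(F)=0$ enters and is the main obstacle. Since $\bar\alpha(F)=K$ generates $K$, the homomorphism $g\colon A_F\to K$ (well-defined after translation) with $\bar\alpha=g\circ a_F$ is surjective, so its dual $\widehat{g}\colon\PK\to\widehat{A_F}$ is injective; as $\bar\alpha^*P\cong a_F^*(\widehat{g}(P))$ we obtain $V^0(\omega_F,\bar\alpha)=\widehat{g}^{\,-1}\big(V^0(\omega_F)\big)$. Here I would invoke the known fact that $V^0(\omega_F)$ consists of finitely many torsion points whenever $\kappa(F)=0$; together with the injectivity of $\widehat{g}$ this shows $V^0(\omega_F,\bar\alpha)$ is finite. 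Consequently the coset $\widehat{\iota}(Q)+\widehat{\iota}(\PB)$ of the connected abelian subvariety $\widehat{\iota}(\PB)$ lies in a finite set, forcing $\widehat{\iota}(\PB)=0$; that is, $\PB\subseteq\widehat{\pr}(\widehat{A/K})$.

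For the final assertion, suppose the translates through $0$ of all components of $V^0(\omega_X,\alpha)$, i.e. all the $\PB$'s, generate $\PA$. By what was just proved each $\PB\subseteq\widehat{\pr}(\widehat{A/K})$, so the abelian subvariety they generate satisfies $\PA=\widehat{\pr}(\widehat{A/K})$. Since $\widehat{\pr}$ is injective this forces $\dim A/K=\dim A$, hence $K=0$ and $\alpha(F)$ is a point for the general fibre $F$ of $I_X$. Therefore $\alpha$ contracts the general fibre of the fibration $I_X$, and by the rigidity lemma it factors through $I_X$, as claimed. The only external input is the $0$-dimensionality of $V^0(\omega_F)$ for $\kappa(F)=0$; everything else is the horizontal/vertical restriction argument and the duality $\widehat{\pr}(\widehat{A/K})=\Ker\widehat{\iota}$.
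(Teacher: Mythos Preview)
Your proof is correct and follows the same strategy as the paper: pass to a general Iitaka fibre $F$, show that the image of $Q+\PB$ under the restriction $\PA\to\PK$ lands in $V^0(\omega_F,\alpha|_F)$, and use that this locus is zero-dimensional because $\kappa(F)=0$. The paper establishes the last point directly via an iterated Clifford-type inequality (from \cite{hp}) giving $P_{2M}(F)>1$, while you package it as the known fact that $\dim V^0(\omega_F)=0$ whenever $\kappa(F)=0$; either route is fine. One harmless slip: a \emph{general} $P\in Q+\PB$ is not torsion (only $Q$ is), but this plays no role in your argument since the zero-divisor of $s$ is effective regardless.
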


\begin{proof}
 We argue by contradiction. We consider the exact sequence
 $$0\rightarrow \widehat{A/K}\xrightarrow{\widehat{\pr}}  \PA\xrightarrow{\lambda} \PK\rightarrow 0.$$

 For a general fiber $F$ of $I_X$, we notice that  $\lambda(V^0(\omega_X, \alpha))\subset V^0(\omega_F, \alpha|_F)$.
 If there exists an irreducible component  $Q+\PB$ of $V^0(\omega_X, \alpha)$ such that $\PB\nsubseteq
\widehat{\pr}(\widehat{A/K})$, then $\lambda(\PB)$ is a positive dimensional abelian subvareity of $\PK$.
Hence $V^0(\omega_F, \alpha|_F)\supset \lambda(Q)+\lambda(\PB)$ has
positive dimension. Moreover, we know by Kawamata's theorem (\cite[Theorem 1]{kaw}) that
$\alpha|_F: F\rightarrow \alpha(F)$ is an algebraic fiber space. Hence the natural morphism $$(\alpha|_F)^*: \Pic^0(\alpha(F))=\PK\rightarrow \Pic^0(F)$$ is
an embedding.
 Let $M\geq 1$ be the order of $ \lambda(Q)$ in $\PK$. Then by repeating Clifford's lemma (see for instance \cite[Lemma 2.14]{hp}), we have $$P_{2M}(F)=\dim H^0(F, \omega_F^{2M})\geq (2M-1)\dim \lambda(\PB)+
1>1,$$ which contradicts the fact that $\kappa(F)=0$.
\end{proof}

Now if $\dim \alpha(X)=\dim X-1$,  we can prove the following much more stronger theorem by induction of dimensions. The proof is very similar to
the proof of Theorem \ref{maintheorem1} and we leave it to interested readers.
\begin{theo}\label{non-generaltype} Under the above setting, the translates through $0$ of all irreducible
components of $V^0(\omega_X, \alpha)$ generate $\widehat{\pr}\widehat{A/K}$.
\end{theo}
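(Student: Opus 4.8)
The plan is to run the contradiction argument of Theorem \ref{maintheorem1} verbatim, but to carry it out by induction on $\dim X$, using Lemma \ref{V0K0} in place of the general-type hypothesis. After replacing $A$ by the abelian subvariety generated by $\alpha(X)-\alpha(X)$ I may assume $\alpha(X)$ generates $A$; then either $A/K=0$ (nothing to prove) or $\pr(\alpha(X))=\alpha_I(I(X))$ generates the nonzero $A/K$. By Theorem \ref{structure} write $V^0(\omega_X,\alpha)=\bigcup_i(Q_i+\widehat B_i)$ with $Q_i$ torsion and $\widehat B_i$ an abelian subvariety; by Lemma \ref{V0K0} every $\widehat B_i\subseteq\widehat{\pr}\,\widehat{A/K}$, so the subtorus $\widehat T$ they generate satisfies $\widehat T\subseteq\widehat{\pr}\,\widehat{A/K}$. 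Assuming $\widehat T\subsetneq\widehat{\pr}\,\widehat{A/K}$, I aim to derive a contradiction.

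When $\dim(A/K)\geq 2$, I would choose a codimension-one subtorus $\widehat S$ with $\widehat T\subseteq\widehat S\subsetneq\widehat{\pr}\,\widehat{A/K}$ and dualize: there is a quotient $p=\sigma\circ\pr\colon A\to T$ with $\widehat{A/\ker p}=\widehat S$, $\dim T=\dim(A/K)-1\geq 1$, $K'=\ker p\supseteq K$ and $\dim(K'/K)=1$. Since $p$ factors through $\pr$, the map $p\circ\alpha=\sigma\circ\alpha_I\circ I_X$ factors through the Iitaka fibration. Taking Stein factorizations, let $f\colon X\to f(X)\subseteq T$ be the induced fibration, $F$ a general fibre, $K'$ translated to contain $\alpha(F)$, and $h=\alpha|_F\colon F\to K'$; as in Theorem \ref{maintheorem1} one has $\dim F-\dim h(F)=1$, and since each $f_*(\omega_X\otimes P)$ is a $GV$-sheaf on $T$ for $P\in\widehat A$, the restriction $\lambda\colon\widehat A\to\widehat{K'}$ maps $V^0(\omega_X,\alpha)$ onto $V^0(\omega_F,h)$. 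Because $\ker\lambda=\widehat{A/K'}=\widehat S$ contains every $\widehat B_i$, the image $\lambda(Q_i+\widehat B_i)$ is a point, so $V^0(\omega_F,h)$ is finite.

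The key new point, which replaces the automatic general-type input of Theorem \ref{maintheorem1}, is that $F$ remains Iitaka-saturated. Indeed $F=I_X^{-1}(G)$ for $G$ a general fibre of $I(X)\to T$, so $G$ has trivial normal bundle in $I(X)$; hence $K_F=K_X|_F$, and restricting the pluricanonical forms defining $I_X$ shows $\kappa(F)\geq\dim G$, which with easy addition gives $\kappa(F)=\dim G$. Thus $I_X|_F$ is (birationally) the Iitaka fibration of $F$, its general fibre is an Iitaka fibre of $X$, and the $h$-image direction $K_F$ of that fibre equals $K$. Therefore the torus $\widehat{q_F}\,\widehat{K'/K_F}=\widehat{K'/K}$ (with $q_F\colon K'\to K'/K$) attached to $(F,h)$ has dimension $\dim(K'/K)=1$. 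Since $\dim F<\dim X$ and $\dim h(F)=\dim F-1$, the inductive hypothesis applies to $h\colon F\to K'$ and forces the finitely many points of $V^0(\omega_F,h)$ to generate a one-dimensional torus, which is absurd; hence $\widehat T=\widehat{\pr}\,\widehat{A/K}$ in this range.

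The main obstacle is the base case $\dim(A/K)=1$, which is exactly the non-general-type analogue of Proposition \ref{main} and where no codimension-one reduction survives. Here $\widehat{\pr}\,\widehat{A/K}$ is a single elliptic curve $\widehat E$, the assumption $\widehat T\subsetneq\widehat E$ forces $\widehat T=0$, and I must instead prove directly that $\dim V^0(\omega_X,\alpha)\geq 1$, i.e. that $V^0(\omega_X,\alpha)$ has a one-dimensional component. Equivalently, for $\beta=\pr\circ\alpha\colon X\to E$ the relevant direct image (after a torsion twist) must have positive degree on $E$. The surjection $\alpha_I\colon I(X)\to E$ is nonconstant, and the real content is that $E$ is genuinely seen by the canonical ring of $X$: via the canonical bundle formula $K_X\sim_\Q I_X^*(K_{I(X)}+\Delta+M)$ with $K_{I(X)}+\Delta+M$ big, one must transport this positivity down $\alpha_I$ to the elliptic curve $E$ to conclude $\dim V^0\geq 1$. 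Making this positivity transfer precise is the delicate heart of the non-general-type case; once it is in hand, the remaining bookkeeping is a transcription of Theorem \ref{maintheorem1}.
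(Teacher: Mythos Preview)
Your overall strategy matches what the paper intends: the authors say only that the proof is ``very similar to the proof of Theorem~\ref{maintheorem1}'' and proceeds ``by induction of dimensions,'' leaving the details to the reader. Your inductive step is set up correctly, and your argument that $I_X|_F$ is (birationally) the Iitaka fibration of $F$ with $K_F=K$ is exactly the bookkeeping needed to make the induction run. In fact your choice of a codimension-one subtorus $\widehat S\supseteq\widehat T$ is a genuine improvement over transcribing Theorem~\ref{maintheorem1} literally: taking the quotient by $\widehat T$ itself fails to reduce dimension when $\widehat T=0$, whereas your $\widehat S$ always produces a strictly smaller fibre as long as $\dim(A/K)\geq 2$.

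That said, you have correctly isolated, and then left open, the one substantive point. The case $\dim(A/K)=1$ is not a mere base case of the induction on $\dim X$: it must be established separately for \emph{every} value of $\dim X$, since your reduction never lands there from above. What is needed is precisely the non-general-type analogue of Proposition~\ref{main} (that $V^0(\omega_X,\alpha)$ cannot be finite once $K\neq A$), and this is not a formality. The proof of Proposition~\ref{main} uses general type in each of its three steps---positivity of $\omega_{X_C/C}$ to force $a_X$ to be a fibration, Lemma~\ref{lemma 1} via an \'etale cover to kill the torsion twists, and the $h^{n-1}$ count---and none of these transports automatically when the fibres of the Stein factorization of $\alpha$ have genus~$1$ rather than $\geq 2$. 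Your suggestion to push positivity through the canonical bundle formula $K_X\sim_{\mathbb Q}I_X^*(K_{I(X)}+\Delta+M)$ and then down $\alpha_I$ to the elliptic curve $E$ is the right shape of argument, but as you yourself note, making this ``positivity transfer'' precise is the whole content; without it the proof is incomplete. Since the paper gives no details either, one cannot compare your handling of this step to theirs---but you should be aware that this is where the real work lies, not in the inductive superstructure you have already built.
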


\subsection{}
Surfaces of Albanese fiber dimension $1$ and of Kodaira dimension $0$ are called hyperelliptic surfaces. They are completely classified: there are seven families of hyperelliptic surfaces
(see \cite[Chapter VI]{B}). In higher dimensions, it was proved by Kawamata (\cite[Theorem 15]{kaw}) that a variety of Albanese fiber dimension $1$ and of Kodaira dimension $0$ is birationally an
\'etale fiber bundle over its Albanese variety.
It is interesting to observe that the elementary method which we use to prove Proposition \ref{main} gives an easy proof of this fact and it does not rely on the
difficult addition theorem of Viehweg (\cite[Theorem I]{V1}).

 \begin{lemm}Let $F$ be a smooth projective variety of Albanese fibre dimension $1$ and of Kodaira dimension $0$.
Then there exists an abelian variety $B$, and an elliptic curve $E$ such that  $F$ is birational to $(B\times E)/H$,
where $H$ is a group of translations of $B$ acting on $E$ such that $E/H=\mathbb{P}^1$. See below the classification of $H$.
\end{lemm}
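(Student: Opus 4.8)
The plan is to run the homogeneous-vector-bundle analysis of Proposition \ref{main} and feed its output into the semipositivity of direct images, thereby replacing Viehweg's addition theorem by an equality case of a positivity inequality. Write $A:=A_F$ and $\cE:=a_{F*}\omega_F$. First I would normalize the Albanese map: taking the Stein factorization $F\xrightarrow{g}W\xrightarrow{h}A$, the same degree argument as in the first step of Proposition \ref{main} (a ramified $h$ would force $\deg(\cE|_C)>0$ on a general complete-intersection curve $C\subset A$ by nefness of $g_*\omega$) shows $h$ is an isomorphism. So $a_F$ is a fibration, $R^1a_{F*}\omega_F=\cO_A$, $\dim A=n-1$, and the general fibre is a smooth connected curve of genus $g\ge 1$ (genus $0$ is excluded as $\kappa(F)\ge 0$). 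Next, since $\kappa(F)=0$ the Iitaka fibration is constant, so in the notation of Lemma \ref{V0K0} one has $K=A$ and $A/K=0$; hence every component $Q+\PB$ of $V^0(\omega_F,a_F)$ has $\PB\subset\widehat{\pr}(\widehat{A/K})=0$, i.e.\ $V^0(\omega_F,a_F)$ is a finite set of torsion points. As $\cE$ is a $GV$-sheaf (\cite{Hac}) with finite $V^0$, the argument of Proposition \ref{main} (via \cite[Example 3.2]{muk2}) makes $\cE$ a homogeneous bundle $\bigoplus_i(P_i\otimes\cV_i)$, with $P_i\in\PA$ torsion and $\cV_i$ unipotent; in particular $\cE$ has vanishing Chern classes and is numerically flat. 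A Kollár count sharpens this: $h^{n-1}(F,\omega_F)=q(F)=n-1$ together with $h^{n-2}(A,\cO_A)=n-1$ forces $h^{n-1}(A,\cE)=0$, so every $P_i\ne\cO_A$ (a summand with $P_i=\cO_A$ would give $h^{n-1}(A,\cV_i)=h^0(A,\cV_i^{\vee})>0$); thus $\cE$ is acyclic and $p_g(F)=0$.

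The heart of the argument is to extract isotriviality from numerical flatness. Restricting to a general complete-intersection curve $C\subset A$ and using base change ($\cE|_C=g_{C*}\omega_{F_C/C}$ for the fibred surface $F_C\to C$, exactly as in Proposition \ref{main}), numerical flatness gives $\deg(\cE|_C)=0$. By the semipositivity of $g_{C*}\omega_{F_C/C}$ and, crucially, the equality case of the associated Arakelov-type inequality (cf.\ \cite{V}), the surface $F_C\to C$ is isotrivial; as $C$ is general, the family $a_F\colon F\to A$ has constant moduli. Consequently $F$ is birational to a global quotient $(Y\times A')/G$ of a product by a finite group $G$ acting diagonally and acting freely and faithfully on the abelian factor $A'$, where $Y$ is the general fibre. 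Hence $\kappa(F)=\kappa(Y)+\kappa(A')=\kappa(Y)$, and $\kappa(F)=0$ yields $\kappa(Y)=0$, i.e.\ $g=1$ and $Y=E$ is an elliptic curve.

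It remains to reach the stated normal form. With $g=1$ we may rewrite $F\sim(B\times E)/H$, where $B$ is abelian, $H$ acts on $B$ by translations (hence freely) with $B/H\cong A=A_F$, and $H$ acts on $E$. Since $q(F)=\dim A=n-1$ and $p_g(F)=0$ (equivalently, $R^1a_{F*}\C$ carries no nonzero invariants, reflecting the nontrivial twists $P_i$ found above), the form $dz$ on $E$ is not $H$-invariant; that is, $H$ acts nontrivially on $H^0(E,\Omega^1_E)$, which is precisely the condition $E/H=\P^1$. The possible actions $H\curvearrowright E$ with $E/H=\P^1$ are the classical ones, giving the asserted list of $H$.

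The main obstacle is the passage in the second paragraph: upgrading the numerical flatness of $a_{F*}\omega_F$ to honest isotriviality of $a_F$ through the equality case of the semipositivity inequality, and then descending the generically-product structure to the explicit global quotient $(B\times E)/H$ with a free translation action on $B$ and the precise identification $E/H=\P^1$. This positivity input is exactly what takes the place of Viehweg's addition theorem \cite{V1}; by contrast, once isotriviality is in hand the identity $\kappa(F)=\kappa(E)$ and the determination of $H$ are elementary.
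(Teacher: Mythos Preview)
Your route diverges from the paper's at the decisive step, and the gap you flag is real.

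The paper argues as follows. By Kawamata's theorem $a_F$ is already an algebraic fiber space, and the general fibre is taken to have genus $1$ from the outset; thus $a_{F*}\omega_F$ is a rank-one $GV$-sheaf with $\dim V^0=0$, hence a single torsion line bundle $P$. The Koll\'ar count gives $P\neq\cO_{A_F}$. One then takes the \'etale cover $\widetilde F\to F$ attached to the cyclic group $G=\langle P\rangle$: this forces $q(\widetilde F)=n$, so $\widetilde F$ (still of Kodaira dimension $0$) is birational to its Albanese variety by Kawamata again. A $\pi_1$ comparison shows $A_{\widetilde F}/G$ is smooth and is a minimal model of $F$; hence $a_F$ is a \emph{smooth} morphism, and Beauville's structure result for smooth elliptic bundles over an abelian base yields the form $(B\times E)/H$ with $H$ acting diagonally. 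No Arakelov-type rigidity enters: the trivializing cover is explicit, handed to you by $P$, and the product structure comes off the shelf.

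Your alternative---leave the genus open, deduce isotriviality of $F_C\to C$ from $\deg(\cE|_C)=0$, pass to a global quotient, and only then read off $g=1$ from $\kappa(F)=\kappa(\text{fibre})$---is conceptually coherent, but the two passages you single out are precisely where the work is missing. The implication ``$\deg f_*\omega_{S/C}=0\Rightarrow$ isotrivial'' is not in \cite{V}; for fibre genus $\ge 2$ it is an Arakelov-type rigidity statement requiring relative minimality or semistable reduction, for genus $1$ it is a separate argument through the $j$-invariant, and at this stage of your proof you do not know which case you are in. More seriously, ``constant moduli over $A\Rightarrow$ birational to $(Y\times A')/G$ with $G$ finite acting by translations on $A'$'' is not formal: producing the trivializing \'etale cover and pinning down its Galois action is exactly the content of the paper's explicit-cover-plus-Beauville step, so a completed version of your argument would likely reconverge with the paper's there. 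What your strategy buys is a uniform treatment that does not presuppose the fibre genus; what the paper's buys is concreteness and an avoidance of any rigidity input beyond the characterization of abelian varieties.

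A minor point: your opening Stein-factorization manoeuvre is circular as written, since the curve-restriction degree argument already presupposes $\dim A_F=n-1$ and the numerical flatness of $\cE$. Simply citing Kawamata's theorem (as the paper does) gives $a_F$ surjective with connected fibres in one stroke and makes that step unnecessary.
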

\begin{proof}
 Since  $F$ is a smooth projective variety of Albanese fibre dimension $1$ and of Kodaira dimension $0$, the Albanese morphism $a_F$ of $F$ is an algebraic fiber space
(see \cite[Theorem 1]{kaw}) and a general
fiber of $a_F$ is a curve of genus $1$. We then consider the rank $1$ sheaf
$a_{F*}\omega_F$. Since $\dim V^0(a_{F*}\omega_F)=\dim V^0(\omega_F, a_F)=0$ and $a_{F*}\omega_F$ is a $GV$-sheaf on $A_F$, we conclude that $a_{F*}\omega_F=P$ is a torsion line
bundle on $A_F$.

Let $n=\dim F$, then $q(F)=\dim A_F=n-1$. Moreover $q(F)=h^{n-1}(A_F, a_{F*}\omega_F)+h^{n-2}(A_F, R^1a_{F*}\omega_F)=h^{n-1}(A_F, a_{F*}\omega_F)+n-1$.
Hence $P\neq  \cO_F$. Let $G$ be the subgroup of $\PA_F$ generated by $P$. Then by considering the \'etale cover
$\pi: \widetilde{F}\rightarrow F$ induced by $G$, we find that
 $q(\widetilde{F})=n$. We now consider the  Albanese morphism $a_{\widetilde{F}}: \widetilde{F}\rightarrow A_{\widetilde{F}}$,
 which is naturally $G$-equivariant.
 Since the Kodaira dimension of $\widetilde{F}$ is also $0$, by \cite[Theorem 1]{kaw}, we know that $\widetilde{F}$ is
 birational to its Albanese variety $A_{\widetilde{F}}$. Therefore  $G$ also acts faithfully on $A_{\widetilde{F}}$.
Considering the following commutative diagram
 \begin{eqnarray*}
 \xymatrix{
 \widetilde{F}\ar[rr]^{\pi}\ar[d]_{a_{\widetilde{F}}}&& F\ar[d]_{\tau}\\
 A_{\widetilde{F}}\ar[rr]^{\widetilde{\pi}} && F':=A_{\widetilde{F}}/G,
 }
 \end{eqnarray*}

since both $\pi$ and $\widetilde{\pi}$ are of degree $|G|$ and $a_{\widetilde{F}}$ is birational, $\tau$ is also a birational morphism.
 Notice that $F'=A_{\widetilde{F}}/G$ has at worst quotient singularities. Hence $\tau$ induces an isomorphism $\pi_1(F)\rightarrow \pi_1(F')$
 (see \cite[Theorem 7.8]{K3}) and we have $$\pi_1(F')/\tau_*\pi_*\big(\pi_1( \widetilde{F})\big)\simeq \pi_1(F)/\pi_*\big(\pi_1( \widetilde{F})\big)\simeq G.$$
 Then the morphism $\tau\circ \pi: \widetilde{F}\rightarrow F'$ factors through the \'etale cover
 $$\widetilde{\pi'}:\widetilde{F'}\rightarrow F'$$induced by the quotient $\pi_1(F')/\tau_*\pi_*\big(\pi_1( \widetilde{F})\big)\simeq G$.

 Hence by the universal property of the Stein factorization of $\tau\circ \pi$, we conclude that $A_{\widetilde{F}} \simeq \widetilde{F'}$ and $\widetilde{\pi}$ is an
 \'etale morphism. Therefore $F'$ is a smooth projective variety and is a good minimal model of $F$.

Now we may and will assume that $\widetilde{F}\simeq A_{\widetilde{F}}$ and $F\simeq F'$. Since the composition of morphisms
$$\widetilde{F}\xrightarrow{\pi} F\xrightarrow{a_F} A_F$$ is a quotient of abelian varieties and $\pi$ is an \'etale morphism, $a_F$ is a smooth morphism. Hence
by \cite[Proposition VI.8]{B} \footnote{We notice that \cite[Proposition VI.8]{B} applies directly when $A_F$ is an elliptic curve and the same proof works when
$A_F$ is an abelian variety.}, there exists an \'etale cover $B\rightarrow A_F$ and an elliptic curve $E$ such that
$F\simeq (B\times E)/H$, where $H$ is the Galois group of $B\rightarrow A_F$. Moreover, by the same proof of \cite[Lemma VI.10]{B}, we may assume that $H$ acts on $B\times E$
diagonally. If $q(F)=\dim F-1\geq 2$, the action of $H$ on $E$ is one of the following
(by the same method of \cite[List VI.20]{B}):
 \begin{itemize}
\item[(1)] $H=\mathbb{Z}/2\mathbb{Z}$ acting on $E$ by symmetry;
\item[(2)] $H=\mathbb{Z}/2\mathbb{Z}\times \mathbb{Z}/2\mathbb{Z}$ acting on $E$ by $x\rightarrow -x$, $x\rightarrow x+\epsilon$ ($\epsilon\in E_2$, where $E_2$
is the group of points of $E$ of order $2$);
\item[(3)] $H=\mathbb{Z}/2\mathbb{Z}\times \mathbb{Z}/2\mathbb{Z}\times \mathbb{Z}/2\mathbb{Z}$ acting on $E$ by $x\rightarrow -x$,
$x\rightarrow x+\epsilon_1$, $x\rightarrow x+\epsilon_2$ ($\epsilon_1, \epsilon_2\in E_2$);
\item[(4)] $H=\mathbb{Z}/4\mathbb{Z}$ acting on $E=E_i:=\mathbb{C}/(\mathbb{Z}\oplus\mathbb{Z}i)$ by $x\rightarrow ix$;
\item[(5)] $H=\mathbb{Z}/4\mathbb{Z}\times \mathbb{Z}/2\mathbb{Z}$ acting on $E=E_i$ by $x\rightarrow ix$,
$x\rightarrow x+\frac{1+i}{2}$;
\item[(6)] $H=\mathbb{Z}/3\mathbb{Z}$ acting on $E=E_{\rho}:=\mathbb{C}/(\mathbb{Z}\oplus\mathbb{Z}\rho)$ (where $\rho^3=1$) by $x\rightarrow \rho x$;
\item[(7)] $H=\mathbb{Z}/3\mathbb{Z}\times\mathbb{Z}/3\mathbb{Z}$ acting on $E=E_{\rho}$ by $x\rightarrow \rho x$, $x\rightarrow x+\frac{1-\rho}{3}$;
\item[(8)] $H=\mathbb{Z}/3\mathbb{Z}\times\mathbb{Z}/3\mathbb{Z}\times \mathbb{Z}/3\mathbb{Z}$ acting on $E=E_{\rho}$ by $x\rightarrow \rho x$,
$x\rightarrow x+\frac{1-\rho}{3}$, $x\rightarrow x-\frac{1-\rho}{3}$;
\item[(9)] $H=\mathbb{Z}/6\mathbb{Z}$ acting on $E=E_{\rho}$ by $x\rightarrow -\rho x$.
 \end{itemize}
We notice that the cases $(3)$ and $(8)$ do not occur when $\dim F=2$.

\end{proof}\

\section{Pluircanonical maps}
In this section we deduce from Theorem \ref{theo} a quick proof of the birationality of the $4$-canonical map of varieties of general type and of Albanese
 fibre dimension $1$, which generalizes a result of Chen and Hacon \cite[Corollary 4.3]{ch2}. We use the strategy described in \cite[Section 6]{pp2},
which is used by the authors of \cite{J} and  \cite{T} to study pluricanonical maps of varieties of maximal Albanese dimension.

Throughout this section we will always denote by the following commutative diagram \begin{eqnarray}\label{setting}
 \xymatrix{
X\ar[d]_f\ar[dr]^{a_X}\\
Y\ar[r]^t & A_X}
\end{eqnarray}
a modification of the Stein factorization of the Albanese morphism of $X$ such that $Y$ is a smooth projective variety.
We denote by $F$ a general fiber of $f$ and, for a point $z\in Y$, we denote by $F_z$  the fiber of $f$ over $z$.
We will say the Iitaka model of $(X, K_X)$ dominates $Y$ if there exists an integer $M>0$ and an ample divisor $A$ on $Y$ such that
$H^0(X, MK_X-f^*A)\neq 0$ (see \cite[Proposition 1.14]{mor}).

\begin{lemm}\label{rest}
Assume that $X$ is a smooth projective variety and the Iitaka model of $(X, K_X)$ dominates $Y$.  Assume that $K_F$ is semiample. Then the co-support of the multiplier ideal $\cJ(||kK_X||)$
is disjoint from a general fiber $F$ for any $k\geq 1$.
\end{lemm}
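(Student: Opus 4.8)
The plan is to prove the statement directly, by showing that the asymptotic multiplier ideal is trivial near every point of a general fibre, rather than via a restriction theorem (which only bounds $\cJ(||kK_X||)\cdot\cO_F$ from above and is therefore useless for proving triviality). Fix a general $y\in Y$ and put $F=f^{-1}(y)$; since the normal bundle of a general fibre is trivial we have $K_X|_F=K_F$, so a global section of $mK_X$ and its restriction to $F$ vanish to the same order at each point of $F$. Consequently the co-support $\cSupp\cJ(||kK_X||)$, which is contained in the stable base locus $\bigcap_{m\ge1}\Bs|mK_X|$, meets $F$ only inside the base locus of the \emph{restricted canonical series} $W_m:=\Im\bigl(H^0(X,mK_X)\to H^0(F,mK_F)\bigr)$; and, since we must control \emph{all} $k\ge1$, what is really needed is that the asymptotic base divisor of $W_\bullet$ on the curve $F$ vanishes, equivalently that the restricted volume satisfies $\vol_{X|F}(K_X)=\deg K_F=\vol(K_F)$. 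Thus the whole problem reduces to showing that $W_\bullet$ is asymptotically the full canonical series of $F$.

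The hypothesis on $K_F$ and the Iitaka domination enter through the lifting of fibrewise sections. By semiampleness of $K_F$ the full series $|mK_F|$ is base-point-free for $m\gg0$, so the target series has vanishing asymptotic base divisor; the point is to realise enough of its sections as restrictions from $X$. For general $y$ the base-change map identifies $f_*\cO_X(mK_X)\otimes\mathbb{C}(y)$ with $H^0(F,mK_F)$, and after twisting by a sufficiently ample $\cO_Y(pA)$ the sheaf $f_*\cO_X(mK_X)\otimes\cO_Y(pA)$ is globally generated at $y$; hence the restriction $H^0(X,mK_X+pf^*A)\to H^0(F,mK_F)$ is surjective. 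I then absorb the auxiliary twist using the hypothesis that the Iitaka model dominates $Y$: choosing a nonzero $s\in H^0(X,MK_X-f^*A)$ and multiplying a lift of $t\in H^0(F,mK_F)$ by $s^{p}$ produces an element of $H^0\bigl(X,(m+pM)K_X\bigr)$ whose restriction to $F$ is $t\cdot(s|_F)^{p}\in W_{m+pM}$. Taking the $t$'s base-point-free on $F$, the only base points of the sections built this way lie on the fixed finite set $E|_F$, where $E=\mathrm{div}(s)\in|MK_X-f^*A|$.

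The main obstacle is precisely to show that this finite set $E|_F$ contributes nothing to the asymptotic base divisor. The factor $(s|_F)^{p}$ forces vanishing of order $p\cdot\mathrm{ord}_{x}(E|_F)$ at $x\in E|_F$, an $\tfrac{1}{m+pM}$-fraction tending only to $\mathrm{ord}_{x}(E|_F)/M$, so the naive construction merely bounds the asymptotic order at $x$ by $\mathrm{ord}_{x}(E|_F)/M$. To push this to zero one must prove the maximality $\vol_{X|F}(K_X)=\deg K_F$ directly: this is where Iitaka domination is essential, as it guarantees that $F$ is not contained in the augmented base locus of $K_X$, so that the restricted volume is positive and, being bounded above by $\deg K_F$ and approximated from below by the lifted sections whose deficiency is supported on the fixed part $E|_F$, it equals $\deg K_F$; equivalently, on the curve $F$ the codimension of $W_m$ in $H^0(F,mK_F)$ grows sublinearly. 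Granting this equality, the asymptotic base divisor of $W_\bullet$ vanishes identically, hence the asymptotic order of $|mK_X|$ along $F$ is zero at every point, and the reduction of the first paragraph yields $\cSupp\cJ(||kK_X||)\cap F=\varnothing$ for a general fibre $F$ and every $k\ge1$, as required.
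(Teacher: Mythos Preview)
There are two genuine gaps. First, your claimed equality $\vol_{X|F}(K_X)=\deg K_F$ is not proven: you yourself observe that the lifting $t\mapsto t\cdot(s|_F)^{p}$ bounds the asymptotic order at $x\in E|_F$ only by $\mathrm{ord}_x(E|_F)/M>0$, and the sentence that the restricted volume, ``being bounded above by $\deg K_F$ and approximated from below by the lifted sections \ldots, equals $\deg K_F$'' is circular --- the lower bound your construction actually yields is strictly smaller unless $p(m)/m\to 0$, which you nowhere establish and which is essentially a weak-positivity statement in disguise. Knowing that $F$ is not in the augmented base locus of $K_X$ gives only $\vol_{X|F}(K_X)>0$; there is no general principle forcing equality with $\vol(K_F)$ for a merely big divisor. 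Second, even granting the volume equality, you would only obtain $\deg(\Bs W_m)=o(m)$; this neither forces $\Bs W_m=\varnothing$ for some $m$ (so the stable-base-locus reduction of your first paragraph does not conclude), nor controls the asymptotic orders along divisorial valuations on $X$ centred over points of $F$, which is what triviality of $\cJ(||kK_X||)$ near $F$ actually requires.

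Your opening dismissal of restriction theorems is also a misconception that closes off the natural route. The restriction inequality $\cJ(F,c\cdot||W_\bullet||)\subseteq \cJ(X,c\cdot||V_\bullet||)\cdot\cO_F$ (Lazarsfeld, Theorems~9.5.1 and~9.5.35) goes exactly the right way to prove triviality: if the left side is $\cO_F$, so is the right. The paper uses this directly. Viehweg's weak positivity makes the restriction $H^0(X,N((m{-}M)K_X+f^*A))\to H^0(F,N(m{-}M)K_F)$ surjective for $N\gg0$, whence $\cJ(||(m{-}M)K_X+f^*A||)\cdot\cO_F\supseteq\cJ(||(m{-}M)K_F||)=\cO_F$ by semiampleness of $K_F$. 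Writing $mK_X\sim(m{-}M)K_X+f^*A+D$ and invoking subadditivity $\cJ(||K_X||)^m\supset\cJ(||mK_X||)$, one gets $\cJ(||K_X||)^m\cdot\cO_F\supset\cO_F(-D|_F)$ for \emph{every} $m>M$; since a proper ideal $I$ in a Noetherian local ring has $\bigcap_m I^m=0$, this forces $\cJ(||K_X||)\cdot\cO_F=\cO_F$. The twist-and-absorb trick with the fixed divisor $D$, together with this Krull-intersection step, is precisely what disposes of the residual base points on $E|_F$ that your argument could not eliminate.
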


\begin{proof}
For simplicity we let $k=1$. The argument works for all $k\geq 1$.

Since the Iitaka model of $(X, K_X)$ dominates $Y$, we can take an ample divisor $A$ on Y and an integer $M>0$  such that there exists an effective divisor
$D\in |MK_X-f^*A|$. By the subadditivity theorem (see \cite[Corollary 11.2.4]{laz}), we have $$\cJ(||K_X||)^m\supset \cJ(||mK_X||), $$ for any integer $m>0.$
Therefore we have for any $m>M$
\begin{eqnarray*}
 \cJ(||K_X||)^m\supset \cJ(||mK_X||)&=&\cJ(||(m-M)K_X+f^*A+D||)\\
  &\supset& \cJ(||(m-M)K_X+f^*A||)\otimes\cO_X(-D).
\end{eqnarray*}
On the other hand, Viehweg's weak positivity theorem (see \cite[Theorem III]{V1}) implies that for $N\gg0$, the restriction $$H^0(X, \cO_X(N(m-M)K_{X/Y}+Nf^*A)
\rightarrow H^0(F,\cO_F(N(m-M)K_{F})$$ is surjective. Since $Y$ is of maximal Albanese dimension, $K_Y$ is an effective divisor. Thus for all $N\gg0$
the restriction
$$H^0(X, \cO_X(N(m-M)K_{X}+Nf^*A)\rightarrow H^0(F,\cO_F(N(m-M)K_{F})$$ is surjective.

Therefore we conclude by \cite[Theorem 9.5.35]{laz} that for a general fiber $F$,
\begin{eqnarray*}
\cJ(||(m-M)K_X+f^*A||)\mid_F=\cJ(||(m-M)K_F||)\simeq\cO_F,
 \end{eqnarray*}
where the last isomorphism holds because $K_F$ is semiample.

Combining all the isomorphisms, we get $\cJ(||K_X||)^m\mid_F\supset \cO_F(-D)$ for all $m>M$. Hence $\cJ(||K_X||)\mid_F=\cO_F$  and the co-support of the multiplier ideal $\cJ(||K_X||)$
is disjoint from a general fiber $F$.
\end{proof}

We employ a trick of Tirabassi (\cite{T}) to show that
\begin{prop}\label{M-regular} Under the setting of diagram (\ref{setting}), we assume moreover that
\begin{itemize}
\item[1)] $2K_F$ is globally generated;
\item[2)] the translates through $0$ of all irreducible components of $V^0(\omega_X)$ generate $\Pic^0(X)$;
\end{itemize}
then there exists an open dense subset $U$ of $X$ such that for any $x\in U$, the sheaf $$a_{X*}(\cI_x\otimes \cO_X(2K_X)\otimes \cJ(||\omega_X||))$$ is $M$-regular.
\end{prop}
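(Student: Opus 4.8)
The plan is to reduce the $M$-regularity of
$$\cF_x:=a_{X*}(\cI_x\otimes\cO_X(2K_X)\otimes\cJ(||\omega_X||))$$
to two statements about the untwisted sheaf $\cF:=a_{X*}(\cO_X(2K_X)\otimes\cJ(||\omega_X||))$ and about generation at the point $x$. Write $\cM=\cO_X(2K_X)\otimes\cJ(||\omega_X||)$. First I would record the behaviour on a general fibre $F$ of $f$: since $X$ is of general type and of Albanese fibre dimension one, $F$ is a smooth curve of genus $\geq 2$, so $K_F$ is ample (hence semiample) and the Iitaka model of $(X,K_X)$ dominates $Y$, so Lemma \ref{rest} gives $\cJ(||\omega_X||)|_F=\cO_F$. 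Thus $\cM|_F\simeq\cO_F(2K_F)$, which is globally generated by assumption 1) and satisfies $H^1(F,\cO_F(2K_F))=H^1(F,\cO_F(2K_F-x))=0$ (both degrees exceed $2g-2$). This vanishing forces $R^1a_{X*}\cM$ and $R^1a_{X*}(\cI_x\otimes\cM)$ to vanish over the open locus of good fibres, so the Leray spectral sequence lets me read the loci $V^i(\cF)$ and $V^i(\cF_x)$ off cohomology on $X$.

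Next I would exploit the short exact sequence $0\to\cF_x\to\cF\to\mathbb{C}_{a_X(x)}\to 0$ obtained by pushing forward $0\to\cI_x\otimes\cM\to\cM\to\cM\otimes\mathbb{C}(x)\to 0$, which is short exact for $x$ on a general fibre by the vanishing just noted. Twisting by $P\in\PA_X$ and taking the long exact sequence yields $V^i(\cF_x)=V^i(\cF)$ for all $i\geq 2$, while in degree one one only picks up the extra locus $S_x:=\{P:\cM\otimes a_X^*P\text{ is not globally generated at }x\}$, so that $V^1(\cF_x)\subseteq V^1(\cF)\cup S_x$. Hence it suffices to prove (a) that $\cF$ is $M$-regular, and (b) that $S_x$ has codimension $\geq 2$ in $\PA_X$ for $x$ in a suitable dense open $U\subset X$.

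For (a), I would write $\cM=\omega_X\otimes(\omega_X\otimes\cJ(||\omega_X||))$ and use that $a_{X*}(\omega_X\otimes\cJ(||\omega_X||))$ is a $GV$-sheaf (generic vanishing for asymptotic multiplier ideals, as in \cite{Hac}) whose zeroth locus is exactly $V^0(\omega_X)$, whose components generate $\PA_X$ by assumption 2). The content is to upgrade the resulting $GV$-property of $\cF$ to the strict inequality $\codim V^i(\cF)>i$ for $i>0$: a component of $V^i(\cF)$ of codimension exactly $i$ would, through the Kollár-type decomposition and the fibration mechanism of Proposition \ref{fiberdimension}, produce a map of $X$ onto a proper abelian quotient of $A_X$ along which the translates through $0$ of the $V^0(\omega_X)$-components cannot generate, contradicting assumption 2). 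This passage from $GV$ to $M$-regular, ruling out codimension-$i$ components by means of the generation hypothesis, is the main obstacle.

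Finally, for (b) I would combine assumption 1) with (a). Global generation of $2K_F=\cM|_F$ reduces the non-generation of $\cM\otimes a_X^*P$ at $x$ to the failure of the restriction $H^0(X,\cM\otimes a_X^*P)\to H^0(F,\cO_F(2K_F))$ to be surjective; by (a) the sheaf $\cF$ is $M$-regular, hence CGG by Theorem \ref{pareschi-popa}, and together with the generation of $\PA_X$ by the components of $V^0(\omega_X)$ this restricts the failure locus $S_x$ to codimension $\geq 2$ once $x$ is chosen in the dense open $U$ over which the fibrewise statements of the first paragraph hold. With $\codim V^1(\cF)\geq 2$ from (a), this gives $\codim V^1(\cF_x)\geq 2>1$ and $\codim V^i(\cF_x)=\codim V^i(\cF)>i$ for $i\geq 2$, which is precisely the $M$-regularity of $\cF_x$. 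I expect the genuinely delicate points to be the codimension bookkeeping in (a) and the extraction of codimension $\geq 2$ for $S_x$ in (b), where assumptions 1) and 2) must be used together.
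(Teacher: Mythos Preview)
Your overall skeleton---reduce to the short exact sequence $0\to\cF_x\to\cF\to\mathbb{C}_{a_X(x)}\to 0$ and then control $V^i(\cF)$ and $V^1(\cF_x)$ separately---matches the paper. But both of your substeps (a) and (b) contain gaps relative to the actual argument.

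For (a), you propose to first show $\cF$ is $GV$ and then ``upgrade'' to $M$-regularity by a contradiction argument invoking Proposition~\ref{fiberdimension}. This is both unnecessary and unclear: Proposition~\ref{fiberdimension} concerns components of $V^0(\omega_X)$, not of $V^i(\cF)$ for the sheaf $\cF=a_{X*}(\omega_X^{2}\otimes\cJ(\|\omega_X\|))$, and it is not evident how a codimension-$i$ component of the latter would yield the fibration you describe. The paper's argument here is one line: by asymptotic Nadel vanishing (writing $2K_X+a_X^*P=K_X+(K_X+a_X^*P)$ with $K_X$ big), one has $H^i(A_X,\cF\otimes P)=0$ for all $i\geq 1$ and all $P\in\widehat{A}_X$. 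Thus $\cF$ is $IT^0$, which is strictly stronger than $M$-regular and makes your proposed upgrade irrelevant.

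For (b), once $V^i(\cF)=\varnothing$ for $i\geq 1$, one has exactly $V^1(\cF_x)=S_x=\{P: x\in\Bs|2K_X+P|\}$. Your plan is to deduce $\codim S_x\geq 2$ from the CGG property of $\cF$ ``together with'' hypothesis~2), but CGG only gives $S_x\subsetneq\widehat{A}_X$, and you do not explain how hypothesis~2) enters. The paper uses Tirabassi's trick instead: write $V^0(\omega_X)=\bigcup_i(Q_i+\widehat{B}_i)$ and choose $x$ in the dense open set $V'=X\setminus\bigcup_i\bigcap_{P\in Q_i+\widehat{B}_i}\Bs|K_X+P|$. Then for each $i$ there is a dense open $\mathscr{U}_i\subset Q_i+\widehat{B}_i$ with $x\notin\Bs|K_X+P|$ for $P\in\mathscr{U}_i$; multiplying two such sections shows $x\notin\Bs|2K_X+Q|$ for every $Q\in 2Q_i+\widehat{B}_i$. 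Hence $S_x$ is disjoint from $\bigcup_i(2Q_i+\widehat{B}_i)$, and since the $\widehat{B}_i$ generate $\widehat{A}_X$ by hypothesis~2), any closed set disjoint from such a union has codimension $\geq 2$ (this is the elementary lemma cited from \cite{T}). Note that neither CGG of $\cF$ nor hypothesis~1) is used in this step; hypothesis~1) is needed only to get the short exact sequence on $A_X$ (surjectivity onto $\mathbb{C}_{a_X(x)}$), and hypothesis~2) enters precisely through the product-of-sections trick, not through an $M$-regularity argument.
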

\begin{proof}
We notice that 1) implies that $K_F$ is semiample and, by Lemma \ref{V0K0}, 2) implies that the Iitaka model of $(X, K_X)$ dominates $Y$.
Hence we conclude by  Lemma \ref{rest}  that the co-support of
$\cJ(||\omega_X||)$ does not dominate $Y$. Hence there exists an
open dense subset $Y_0$ of $Y$ such that $V:=f^{-1}Y_0$ is disjoint from the co-support of
$\cJ(||\omega_X||)$, $f: V\rightarrow Y_0$ is smooth, $t$ is finite on $Y_0$, and $2K_F$ is globally generated for any fiber $F$ of $f$ over $Y_0$.
Then for any $x\in V$, we have an exact sequence
 \begin{multline*}0\rightarrow a_{X*}(\cI_x\otimes \cO_X(2K_X)\otimes\cJ(||\omega_X||))\rightarrow a_{X*}(\cO_X(2K_X)\otimes\cJ(||\omega_X||))
 \\\rightarrow \mathbb{C}_{a_X(x)}\rightarrow 0.\end{multline*}

By a variant of Nadel vanishing theorem
(see \cite[10.15]{K1} or \cite[Lemma 2.1]{j1}), we have
  $H^i(A_X,  a_{X*}(\cO_X(2K_X)\otimes\cJ(||\omega_X||))\otimes P)=0$, for any $i\geq 1$ and $P\in\Pic^0(A_X)$.
Hence, to conclude the proof of the proposition,  all we need to prove is that $$\codim_{\PA_X}\big(V^1(a_{X*}(\cI_x\otimes \cO_X(2K_X)\otimes\cJ(||\omega_X||)))\big)\geq 2.$$

We observe from the above exact sequence that for $x\in V$, $$V^1(a_{X*}(\cI_x\otimes \cO_X(2K_X)\otimes\cJ(||\omega_X||)))=
\{P\in\PA_X\mid x\in \Bs(|2K_X+P
|)\}.$$

Now we write $$V^0(\omega_X, a_X)=\cup_{i}(Q_i+\PB_i).$$ Let $$V'=X\moins \bigcup_i\cap_{P\in Q_i+\PB_i}\Bs(|K_X+P|).$$
Then for each $x\in V'$, we can choose an open dense subset $\mathscr{U}_i$ of $Q_i+\PB_i$ for each $i$ such that for any $P\in \cup_i\mathscr{U}_i$, $x$ is
not a base point of $|K_X+P|$. Then, $x$ is not a base point of $|2K_X+Q|$, for any $Q\in \bigcup_i(\mathscr{U}_i+\mathscr{U}_i)=\bigcup_i (2Q_i+\PB_i)$.

Let $U=V\cap V'$. We then conclude that for any $x\in U$,
\begin{eqnarray*}
V^1(a_{X*}(\cI_x\otimes \cO_X(2K_X)\otimes\cJ(||\omega_X||)))\cap \bigcup_i(2Q_i+\PB_i)=\emptyset.
\end{eqnarray*}

According to hypothesis $2)$, all the $\PB_i$'s generate the whole abelian variety $\PA_X$, hence we have (see for instance \cite[Lemma 2]{T})
$$\codim_{\PA_X}\big(V^1(a_{X*}(\cI_x\otimes \cO_X(2K_X)\otimes\cJ(||\omega_X||)))\big)\geq 2.$$
\end{proof}

Now we are ready to prove the
following theorem, which is slightly more general than Theorem
\ref{theorem1.1}.

\begin{theo}\label{pluri}Let $X$ be a smooth projective variety of general type and of Albanese fiber dimension $1$.
Then, for any $P\in\Pic^0(X)$, the map $\varphi_{4K_X+P}$ induced by the linear system $|4K_X+P|$ is
a birational map of $X$.
\end{theo}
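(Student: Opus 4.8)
The plan is to reduce the birationality of $\varphi_{|4K_X+P|}$ to the separation of two general points of $X$, and then to produce the separating pluricanonical sections by feeding the continuous global generation supplied by Proposition \ref{M-regular} into the Pareschi--Popa--Tirabassi machinery. First I would verify the hypotheses of Proposition \ref{M-regular}. Since $X$ is of general type and of Albanese fiber dimension one, a general fiber $F$ of the fibration $f$ in diagram (\ref{setting}) is a smooth projective curve of genus $m\geq 2$; thus $K_F$ is base point free and $2K_F$ is globally generated, so hypothesis 1) holds, while hypothesis 2) is precisely Theorem \ref{theo}. Hence there is a dense open $U\subset X$ such that for every $x\in U$ the sheaf $\cF_x:=a_{X*}(\cI_x\otimes\cO_X(2K_X)\otimes\cJ(||\omega_X||))$ is $M$-regular, and by Theorem \ref{pareschi-popa} it is CGG at every point of $A_X$; the same applies to $\cG:=a_{X*}(\cO_X(2K_X)\otimes\cJ(||\omega_X||))$. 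I will also use that $a_X$ induces an isomorphism on $\Pic^0$, writing $P=a_X^*P_0$ with $P_0\in\PA_X$, and that a general fiber $F$ has trivial normal bundle, so that adjunction gives $(4K_X+P)|_F=4K_F+P|_F$, a line bundle of degree $8m-8\geq 2m+1$ and therefore very ample on $F$.

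Since $4K_X$ is big, it suffices to separate two general points $x_1,x_2\in U$: generic injectivity then forces $\varphi_{|4K_X+P|}$ to be birational onto its image. Fixing such $x_1,x_2$, I seek a section of $4K_X+P$ through $x_1$ but not through $x_2$; by symmetry this yields separation. If $a_X(x_1)\neq a_X(x_2)$, continuous global generation of $\cF_{x_1}$ at the point $a_X(x_2)$ produces, for general $\beta\in\PA_X$, a section $s\in H^0(X,\cO_X(2K_X)\otimes a_X^*\beta)$ vanishing at $x_1$ with $s(x_2)\neq0$, while global generation of $\cG$ away from its base locus yields $u\in H^0(X,\cO_X(2K_X)\otimes a_X^*(P_0-\beta))$ with $u(x_2)\neq0$; the product $su\in H^0(X,4K_X+P)$ then vanishes at $x_1$ and not at $x_2$. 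Here I use the standard identity $H^0(X,\cO_X(2K_X)\otimes\cJ(||\omega_X||))=H^0(X,\cO_X(2K_X))$ and that the cosupport of $\cJ(||\omega_X||)$ misses $F$ by Lemma \ref{rest}.

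The main obstacle is the case $a_X(x_1)=a_X(x_2)$, i.e.\ $x_1,x_2$ in the same general fiber $F$, where the Albanese cannot distinguish the two points and $2K_F$ need not even be very ample when $m=2$. Here I would work on the curve: using very ampleness of $4K_F+P|_F$, choose $\sigma\in H^0(F,4K_F+P|_F)$ with $\sigma(x_1)=0\neq\sigma(x_2)$, and then realize a separating section as the restriction of a global section of $4K_X+P$. This is the trick of Tirabassi: continuous global generation of $\cF_{x_1}$ and $\cG$ makes the restrictions $H^0(X,\cO_X(2K_X)\otimes a_X^*\beta)\to H^0(F,2K_F+\beta|_F)$ surjective onto enough sections as $\beta$ varies, and since the two factors have large degree on $F$ the multiplication map $H^0(F,2K_F+\beta|_F)\otimes H^0(F,2K_F+(P_0-\beta)|_F)\to H^0(F,4K_F+P|_F)$ is surjective; products of extendable sections therefore span a subspace of $H^0(F,4K_F+P|_F)$ that still separates $x_1$ from $x_2$, and imposing vanishing at $x_1$ through $\cF_{x_1}$ while keeping nonvanishing at $x_2$ produces the required global section.

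I expect the crux, and the only delicate point, to be precisely this passage in the same-fiber case from the fiberwise very ampleness of $4K_F$ to global sections of $4K_X+P$: it is governed by the interplay of continuous global generation, the multiplier ideal $\cJ(||\omega_X||)$ (whose cosupport avoids $F$ by Lemma \ref{rest}), and the surjectivity of the multiplication maps on the fiber curve, which must be checked for all the relevant degrees arising when $m$ is small.
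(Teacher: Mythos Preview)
Your reduction via Proposition~\ref{M-regular} and Theorem~\ref{pareschi-popa}, and your treatment of pairs with $a_X(x_1)\neq a_X(x_2)$, match the paper's Steps~1--2; the paper in fact goes a bit further and shows that $\varphi_{|4K_X+P|}$ separates any two points of $V$ whenever $|2K_F|$ already separates them, by pushing the surjection $m_{x,y}$ of (\ref{mxy}) down to $A_X$ and invoking CGG. The genuine gap is your same-fiber argument when the fiber genus is $m=2$. Since every $a_X^*\beta$ restricts trivially to $F$, your multiplication map reads
\[
H^0(F,2K_F)\otimes H^0(F,2K_F)\longrightarrow H^0(F,4K_F),
\]
and this is \emph{not} surjective: for a genus-$2$ curve $|2K_F|$ factors through the hyperelliptic double cover, so every section of $2K_F$ is $\tau$-invariant, and the image of the multiplication is exactly the $5$-dimensional $\tau$-invariant summand of the $7$-dimensional $H^0(F,4K_F)$. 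Any section of $4K_X+P$ you build as a sum of products of twisted bicanonical sections therefore restricts to a $\tau$-invariant section of $4K_F$ and cannot separate $x_1$ from its hyperelliptic conjugate $\tau(x_1)$; equivalently, imposing vanishing at $x_1$ through $\cF_{x_1}$ already forces vanishing at $\tau(x_1)$. Your argument as written thus only yields $\deg\varphi_{|4K_X+P|}\leq 2$ and cannot rule out that the map factors through the global hyperelliptic involution.

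This is precisely the obstacle the paper's Step~3 is designed to remove. One globalizes $\tau$ to a biregular involution of $X$, forms the quotient $\phi\colon X\to Z=X/\langle\tau\rangle$, and decomposes $\phi_*\cO_X(4K_X)=\cO_Z(4K_Z+4D)\oplus\cO_Z(4K_Z+3D)$ into $\tau$-invariant and anti-invariant parts. The extra input, not available from your multiplication argument, is that the push-forward to $A_X$ of the anti-invariant summand (cut down by the relevant multiplier ideal) is a nonzero $IT^0$ sheaf; hence $H^0(Z,\cO_Z(4K_Z+3D)\otimes P)\neq 0$ for every $P\in\Pic^0(X)$, and any such anti-invariant section separates a general $x$ from $\tau(x)$, showing $\varphi_{|4K_X+P|}$ does not factor through $\phi$. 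No construction built solely from multiplying sections of $|2K_X+\beta|$ can substitute for this step.
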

\begin{proof}
We use the setting of diagram (\ref{setting}).

By Proposition \ref{M-regular}, we can take an open dense subset
$U$ of $Y$  and an open dense subset $V\subset f^{-1}U$ such that $f^{-1}U$ is disjoint from the co-support of
$\cJ(||\omega_X||)$, $f: f^{-1}U\rightarrow U$ is smooth, $t$ is finite on $U$, and for any $x\in
V$, $a_{X*}(\cI_x\otimes
\omega_X^2\otimes \cJ(||\omega_X||))$ is $M$-regular and thus is continuously globally generated on $A_X$ by Theorem
\ref{pareschi-popa}.

Any fiber $F$ of $f$ over $U$ is a smooth projective curve of genus $g:=g(F)\geq 2$.
We notice that $|2K_F|$ induces an isomorphism of $F$ if $g \geq 3$, and if $g =2$, then $F$ is a hyperelliptic curve  with hyperelliptic involution $\tau_F$
and the linear system $|2K_F|$ induces the quotient morphism of $F$ by $\tau_F$.

 We see from the choice of $U$ that, for any $t\in U$, the natural map
 $$f_*(\cO_X(2K_X)\otimes\cJ(||\omega_X||))\otimes\mathbb{C}_t\rightarrow H^0\big(F_t, \cO_X(2K_X)\otimes\cJ(||\omega_X||)\big)\simeq H^0(F_t, \omega_{F_t}^2)$$
is an isomorphism.

For two different points $x, y\in V$, we denote respectively by $F_{f(x)}$ and $F_{f(y)}$ the fibers of $f$ over $f(x)$ and $f(y)$.

By pushing forward via $f$ the following short exact sequence:
$$0\rightarrow \cI_{x, y}\otimes \omega_X^2\otimes \cJ(||\omega_X||)\rightarrow \cI_{x}\otimes \omega_X^2\otimes \cJ(||\omega_X||)
\rightarrow \mathbb{C}_y\rightarrow 0,$$
we get the long exact sequence:
\begin{multline}\label{mxy} 0\rightarrow f_*(\cI_{x, y}\otimes \omega_X^2\otimes \cJ(||\omega_X||))
\rightarrow f_*(\cI_{x}\otimes \omega_X^2\otimes \cJ(||\omega_X||))\\ \xrightarrow{m_{x,y}} f_*\mathbb{C}_{y}\simeq\mathbb{C}_{f(y)}\rightarrow \cdots
\end{multline}

\medskip
\noindent{\bf Step 1.}
{\em  For any two different points $x, y\in V$, $m_{x,y}$ is surjective, unless $g=2$, $f(x)=f(y)$, and $y=\tau_{F_{f(x)}}(y)$.

}
The argument to prove the claim is contained in (\cite[Step 2 of the proof of Theorem 4.2]{ch2}). For reader's convenience, we repeat it here.

If $f(x)\neq f(y)$, then
$$f_*(\cI_x\otimes \omega_X^2\otimes \cJ(||\omega_X||))\otimes \mathbb{C}_{f(y)}\simeq H^0(F_{f(y)}, \omega_{F_{f(y)}}^2).$$
Since $\omega_{F_{f(y)}}^2$ is base point free, $m_{x,y}$ is surjective.

If $f(x)=f(y)$, we denote by $t=f(x)$ and notice that
\begin{multline*}
f_*(\cI_{x}\otimes \omega_X^2\otimes \cJ(||\omega_X||))\otimes \mathbb{C}_{t}\\=
\ker\big(f_*(\cO_X(2K_X)\otimes\cJ(||\omega_X||))\otimes\mathbb{C}_{t}\simeq H^0(F_{t}, \omega_{F_{t}}^2)\xrightarrow{ev_x}  \mathbb{C}_x\big),
\end{multline*}
and
\begin{multline*}
f_*(\cI_{x,y}\otimes \omega_X^2\otimes \cJ(||\omega_X||))\otimes \mathbb{C}_{t}\\=
\ker\big(f_*(\cO_X(2K_X)\otimes\cJ(||\omega_X||))\otimes\mathbb{C}_{t}\simeq H^0(F_{t}, \omega_{F_{t}}^2)\xrightarrow{(ev_x,ev_y)}  \mathbb{C}_x\oplus \mathbb{C}_y\big).
\end{multline*}
If $g \geq 3$ or $g=2$ and $\tau_{F_{t}}(x)\neq y$, the linear system $|2K_{F_{t}}|$ separates $x$ and $y$ and hence $(ev_x,ev_y)$ is surjective.
Thus the inclusion
$$f_*(\cI_{x, y}\otimes \omega_X^2\otimes \cJ(||\omega_X||))\otimes \mathbb{C}_{t}\hookrightarrow f_*(\cI_{x}\otimes \omega_X^2\otimes \cJ(||\omega_X||))\otimes
\mathbb{C}_{t}$$ is strict. Hence $m_{x, y}$ is again surjective.

\medskip
\noindent{\bf Step 2.}
{\em If $g\geq 3$, $\varphi_{4K_X+P}$ is birational for any $P\in\Pic^0(X)$.}

Since $g\geq 3$, for any two different points $x, y\in V$, we have by Step 1 the following short exact sequence:
\begin{multline*}0\rightarrow f_*(\cI_{x,y}\otimes \omega_X^2\otimes \cJ(||\omega_X||))
\rightarrow f_*(\cI_{x}\otimes \omega_X^2\otimes \cJ(||\omega_X||))\\ \xrightarrow{m_{x,y}} \mathbb{C}_{f(y)}\rightarrow 0.
\end{multline*}
Notice that $t$ is finite on $U$ and $a_X=t\circ f$. Hence we have the following short exact sequence on $A_X$:
\begin{multline}\label{multline}0\rightarrow a_{X*}(\cI_{x, y}\otimes \omega_X^2\otimes \cJ(||\omega_X||))
\rightarrow a_{X*}(\cI_{x}\otimes \omega_X^2\otimes \cJ(||\omega_X||))\\ \xrightarrow{ev_{x,y}}\mathbb{C}_{a_X(y)}\rightarrow 0.
\end{multline}
Because $a_{X*}(\cI_x\otimes \omega_X^2\otimes
\cJ(||\omega_X||))$ is continuously globally generated,
there exists an open dense subset $\cU_{x,y}$ of $\Pic^0(X)$ such that  for any $Q\in \cU_{x,y}$  the map
$$H^0(A_X, a_{X*}(\cI_{x}\otimes \omega_X^2\otimes \cJ(||\omega_X||))\otimes Q)\xrightarrow{ev_{x,y}} \mathbb{C}_{a_X(y)} $$
is surjective. In other words, for
any $Q\in \cU_{x,y}$, there exists a global
section
\begin{eqnarray*}s_Q&\in & H^0(A_X, a_{X*}(\cI_{x}\otimes \omega_X^2\otimes \cJ(||\omega_X||))\otimes Q)\\&\simeq & H^0(X, \cI_{x}\otimes \omega_X^2\otimes \cJ(||\omega_X||)\otimes Q)\\
&\subset& H^0(X, \cI_{x}\otimes \omega_X^2\otimes Q),
 \end{eqnarray*}which  does not vanish at $y$, and hence the divisor $D_Q\in |2K_X+Q|$ defined by $s_Q$ passes through $x$ but does not pass through $y$.
Therefore, for any $P\in \cU_{x,y}+\cU_{x,y}=\Pic^0(X)$, the linear system $|4K_X+P|$ separating $x$ and $y$. The map
$\varphi_{4K_X+P}$ is an injective morphism on $V$ and we are done.

\medskip
\noindent{\bf Step 3.}
{\em If $g=2$, $\varphi_{4K_X+P}$ is also birational for any $P\in\Pic^0(X)$.}

Since $g=g(F)=2$, the generic fiber of $f$ is a hyperelliptic curve. Hence there exists a birational involution $\tau$ of $X$
inducing $\tau_F$ in a
general fiber $F$. After birational modifications of $X$ (see for instance \cite[4.2]{pr}), we may assume that $\tau$ is a biregular involution of the smooth projective variety $X$. Then each connected component of the fixed loci of $\tau$ is smooth.
After blowing-up the fixed loci of $\tau$,
we may  assume that each connected component of the fixed loci of $\tau$ is a smooth divisor of $X$ and hence $Z:=X/\langle\tau\rangle$ is also smooth. We choose $U$ and $V$ as before. Moreover, we may assume that $\tau(V)=V$.
Now we have the commutative diagram
\begin{eqnarray*}
\xymatrix{X\ar[r]^{\phi}\ar[dr]_f&  Z\ar[d]^g\\
& Y,}
\end{eqnarray*}
where $\phi$ is the quotient morphism induced by $\tau$.

By Step 1, for two points $x, y \in V$ with $\tau(x)\neq y$, $m_{x,y}$ in the long exact sequence (\ref{mxy})
 is surjective. Hence the arguments in Step 2 show that $\varphi_{4K_X+P}$ separates any two
 different points $x, y\in V$ with $ \tau(x)\neq y$.

Therefore $\phi$ factors  birationally through $\varphi_{4K_X+P}$  and
$\deg\varphi_{4K_X+P}\leq 2$. In order to finish the proof, we just need
to prove that $\varphi_{4K_X+P}$ does not factor birationally through $\phi$.

Because $\tau$ induces the hyperelliptic involution on a general fiber of $f$, a general fiber $L$ of $g$ is isomorphic
to $\mathbb{P}^1$. Since $\phi$ is a finite morphism of degree $2$ between smooth projective varieties, there exists a divisor $D$ on $Z$ such that
\begin{eqnarray}\label{definitionD}\phi_*\cO_X=\cO_Z\oplus \cO_Z(-D),\\\nonumber
 \omega_X=\phi^*(\omega_Z\otimes \cO_Z(D)).
\end{eqnarray}
  Moreover, $\deg(D|_L)=3$.

By (\ref{definitionD}), we  have
\begin{eqnarray*}
 \phi_*\cO_X(4K_X)=\cO_Z(4K_Z+4D)\oplus \cO_Z(4K_Z+3D),
\end{eqnarray*}
where $\cO_Z(4K_Z+4D)$ (resp. $\cO_Z(4K_Z+3D)$) is the $\tau$-invariant part (resp. $\tau$-anti-invariant part) of $\phi_*\cO_X(4K_X)$.
Then there exist ideal sheaves $\cI_1$ and $\cI_2$ on $Z$ such that
\begin{eqnarray*}
\phi_*\big(\cO_X(4K_X)\otimes\cJ(||3K_X||)\big)&=&\big(\cO_Z(4K_Z+4D)\otimes\cI_1\big)\\&\oplus&\big(\cO_Z(4K_Z+3D)\otimes\cI_2\big).
\end{eqnarray*}

Because $\deg(\omega_Z|_L)=-2$, both $g_*(\cO_Z(4K_Z+4D))$ and
$g_*(\cO_Z(4K_Z+3D))$ are nonzero torsion-free sheaves on $Y$. Since the co-support of
$\cJ(||3K_X||)$ does not dominate $Y$, the sheaves $g_*\big(\cO_Z(4K_Z+4D)\otimes\cI_1\big)$ and
$g_*\big(\cO_Z(4K_Z+3D)\otimes\cI_2\big)$ are also nonzero. Thus, since
$$t_{*}f_*\big(\cO_X(4K_X)\otimes\cJ(||3K_X||)\big)$$ is an $IT^0$
sheaf on $A_X$, we conclude that the sheaves
\begin{eqnarray*}t_{*}g_*\big(\cO_Z(4K_Z+4D)\otimes\cI_1\big) \;\;
\textrm{and} \;\;
t_{*}g_*\big(\cO_Z(4K_Z+3D)\otimes\cI_2\big)\end{eqnarray*}
are nonzero $IT^0$
sheaves on $A_X$.

Therefore, for any $P\in\PA_X$, $H^0(Z, \cO_Z(4K_Z+3D)\otimes P)\neq 0$. Because $\cO_Z(4K_Z+3D)\otimes P$ is the $\tau$-anti-invariant part of
$\phi_*\cO_X(4K_X)\otimes P$, a nonzero global section of $\cO_Z(4K_Z+3D)\otimes P$ separates $x$ and $\tau(x)$, for $x\in V$ general. Thus
$\varphi_{4K_X+P}$ does not factor  birationally through $\phi$ and $\varphi_{4K_X+P}$ is
birational.
\end{proof}
\begin{rema}
It is well-known that the tricanonical map of a smooth projective curve of genus $\geq 2$ is an isomorphism. Recently, the authors of \cite{JLT} have shown that
the tricanonical map is birational for a  variety
of maximal Albanese dimension and of general type. These lead the authors to believe that the tricanonical map of a variety of Albanese fiber dimension $1$ and of general type
is also birational.

By the same method in the proof of Theorem \ref{pluri}, it is quite easy to show that the tricanonical map is birational if $V^0(\omega_X)=\Pic^0(X)$. Hence it leaves to analyse
the case when $V^0(\omega_X)\neq \Pic^0(X)$. One might want to mimic the argument in \cite{JLT}.
 For each irreducible component $P+\PB$ of $V^0(\omega_X)$ of codimension $k$ in $\Pic^0(X)$, we do have a natural fibration by Proposition \ref{fiberdimension}:
\begin{eqnarray*}
\xymatrix{
 X\ar[d]^f\ar[dr]^{a_X}\\
Y\ar[r]^t\ar[d]^{f_B} & A_X\ar[d]\\
X_B\ar[r]& B,}
\end{eqnarray*}
where $X_B$ is of dimension $\dim X-k-1$.
The problem here is that we don't have a good geometric explanation for the existence of $P+\PB$ unlike the irreducible component of $V^0$ of a variety of maximal Albanese dimension
(see \cite[Theorem 3.1]{CDJ}). But the sheaf $R^kf_{B*}(f_*\omega_X\otimes P)$ does have certain positive properties over $X_B$ and may be crucial to a proof of the birationality
of the tricanonical map.
\end{rema}

\end{document}